\providecommand{\U}[1]{\protect\rule{.1in}{.1in}}
\newtheorem{theorem}{Theorem}[section]
\newtheorem{lemma}[theorem]{Lemma}
\newtheorem{proposition}[theorem]{Proposition}
\newtheorem{corollary}[theorem]{Corollary}
\theoremstyle{definition}
\newtheorem{definition}[theorem]{Definition}
\newtheorem{example}[theorem]{Example}
\newtheorem{problem}[theorem]{Problem}
\theoremstyle{remark}
\numberwithin{equation}{section}
\begin{document}
\title[Maximal Tracial Algebras]{Maximal Tracial Algebras}
\author[D. Hadwin \MakeLowercase{and} H. Yousefi]{Don Hadwin$^{1}$ \MakeLowercase{and} Hassan Yousefi$^{2\ast}$}
\address{$^{1}$Department of Mathematics \& Statistics, University of New Hampshire, 33
Academic Way, Durham, NH 03824\\
$^{2}$Department of Mathematics, California State University Fullerton, CA
92831, USA.}
\email{\textcolor[rgb]{0.00,0.00,0.84}{hyousefi@fullerton.edu}}
\subjclass[2010]{Primary 46L05; Secondary 47L10, 46M07, 46L10.}
\keywords{Tracial Algebras, Dual Pairs, Multiplier Pairs, Tracial Ultraproducts.}
\date{Received: xxxxxx; Revised: yyyyyy; Accepted: zzzzzz. }
\date{\indent$^{\ast}$Corresponding author}
\date{Received: xxxxxx; Revised: yyyyyy; Accepted: zzzzzz. }
\date{\indent$^{\ast}$Corresponding author}

\begin{abstract}
We introduce and study the notion of \emph{maximal tracial algebras.} We prove
several results in a general setting based on dual pairs and multiplier pairs.
In a special case that $X$ is a Banach space we determine the abelian
subalgebras of $\mathcal{B}\left(  X\right)  $ that are maximal tracial for
rank-one tensors. In another special case that $\mathcal{H\ }$is a Hilbert
space we show that a unital weak-operator closed subalgebra $\mathcal{A}$ of
$\mathcal{B}\left(  \mathcal{H}\right)  $ is abelian and transitive if and
only if it is maximal $e\otimes e$-tracial for every unit vector $e$ in
$\mathcal{H}$. We also make slight connections between our ideas and the
Kadison Similarity Problem and also the Connes' Embedding Problem.

\end{abstract}
\maketitle

\setcounter{page}{1}

%\dedicatory{This paper is dedicated to Professor ABCD}

\let\thefootnote\relax\footnote{Copyright 2016 by the Tusi Mathematical
Research Group.}

\section{Introduction and Definitions}

Algebras with traces have played an important role in linear algebra and the
theory of operator algebras \cite{Russo}, \cite{Cuntz}, \cite{Kad2}. We
consider a situation where $\mathcal{A}$ is a unital subalgebra of an algebra
$\mathcal{B}$ and $\varphi$ is a functional on $\mathcal{B}$ with
$\varphi\left(  1\right)  =1$ that is tracial on $\mathcal{A}$. We study the
case when $\mathcal{A}$ is a maximal such algebra for $\varphi.$ For the case
when $\mathcal{B}$ is the algebra of $2\times2$ matrices over an arbitrary
field, we completely characterize the pairs $\left(  \mathcal{A}%
,\varphi\right)  $ so that $\mathcal{A}$ is maximal $\varphi$-tracial. In
particular we show some connections between our ideas in this paper and two
very famous problems in Functional Analysis, namely the \emph{Kadison
Similarity Problem} \cite{Kad} and the \emph{Connes' Embedding Problem}
\cite{Connes}.

Throughout this paper $\mathcal{H}$ is a separable Hilbert space over the
field of complex numbers $\mathbb{C}$ and $\mathcal{B}\left(  \mathcal{H}%
\right)  $ denotes the set of all bounded operators on $\mathcal{H}.$ We show
the set of all $k\times k$ matrices over a field $\mathbb{F}$ by $M_{k}\left(
\mathbb{F}\right)  $ and for $A\in M_{k}\left(  \mathbb{F}\right)  $ we use
$Tr\left(  A\right)  $ to denote the usual trace of $A$. The identity matrix
in $M_{k}\left(  \mathbb{F}\right)  $ is shown by $I_{k}$ and by $e_{ij}$
($1\leq i,j\leq k$) we mean the standard matrix units in $M_{k}\left(
\mathbb{F}\right)  $, i.e. the $k\times k$ matrix whose all entries are $0$'s
except the $\left(  i,j\right)  $-entry which is $1$. For $x,y\in\mathcal{H}$,
the rank-one operator $x\otimes y$ is defined by $\left(  x\otimes y\right)
h=\left\langle h,y\right\rangle x$. Note that if $A,B\in\mathcal{B}\left(
\mathcal{H}\right)  $, then $A\left(  x\otimes y\right)  B=Ax\otimes B^{\ast
}y.$ If $\mathcal{B}$ is an algebra over a field $\mathbb{F}$ and
$\mathcal{S}\subseteq\mathcal{B}$, then by the commutant $\mathcal{S}^{\prime
}$ of $\mathcal{S}$ in $\mathcal{B}$ we mean $\mathcal{S}^{\prime}=\left\{
B\in\mathcal{B}:BS=SB,\forall S\in\mathcal{S}\right\}  .$ If the algebra
$\mathcal{B}$ is unital, then we show its unit by $1.$

Section one of this paper contains some definitions, notations, terminologies,
and a motivating example.

Most of our main results are in section two where we prove several theorems in
very general cases for \emph{dual pairs}. Theorem \ref{10} is one of the main
results of this section in which we characterize when an abelian algebra is
maximal $\varphi$-tracial provided that $\varphi$ is a rank-one functional.
This result, for instance, is used in a special case in Corollary \ref{25} to
determine which abelian subalgebras of $\mathcal{B}\left(  X\right)  $ are
maximal tracial for rank-one tensors $x\otimes\alpha,$ where $X$ is a Banach
space, $x\in X,$ and $\alpha$ is a unital linear functional in the normed dual
of $X.$ In this section, we also show that a unital weak-operator closed
subalgebra $\mathcal{A}$ of $\mathcal{B}\left(  \mathcal{H}\right)  $ is
maximal $e\otimes e$-tracial for every unit vector $e$ in $\mathcal{H}$ if and
only if $\mathcal{A}$ is abelian and transitive. We also provide several
interesting examples in this section.

Section three deals with Multiplier Pairs. We prove a general statement in
Theorem \ref{35} about multiplier pairs that gives a lot of examples of
maximal tracial algebras. For instance we use Theorem \ref{35} in section four
where maximal tracial algebras of von Neumann algebras are studied.

In section four, after proving a few theorems, we raise the following
question: If a von Neumann algebra $\mathcal{M}$ on a Hilbert space
$\mathcal{H}$ is $\left(  e\otimes f\right)  $-tracial, where $e,f\in
\mathcal{H}$ are cyclic vectors for $\mathcal{M}$ and $\left\langle
e,f\right\rangle =1$, then should $\mathcal{M}$ be maximal $\left(  e\otimes
f\right)  $-tracial? For the case where $\mathcal{H}$ is separable, we will
reduce the problem to the case where $\mathcal{M}$ is a finite factor von
Neumann algebra, and such a factor has a unique norm continuous unital tracial
functional. We will also show a slight relationship between some of our ideas
in this section and the Kadison Similarity Problem \cite{Kad}.

In the last section of our paper, by introducing a new type of ultraproduct,
we make a connection between some of our results in this paper and the Connes'
Embedding Problem \cite{Connes}. We will show in Theorem \ref{60} that the
analogue of Connes' embedding problem has an affirmative answer in the setting
of maximal tracial ultraproducts.

\begin{definition}
Suppose $\mathcal{A}$ and $\mathcal{B}$ are unital algebras over a field
$\mathbb{F}$ with $1\in\mathcal{A}\subset\mathcal{B}$ and suppose
$\varphi:\mathcal{B}\rightarrow\mathbb{F}$ is a linear map such that
$\varphi\left(  1\right)  =1$. We say that $\mathcal{A}$ is a \emph{tracial
algebra} for $\varphi$ or $\mathcal{A}$ is $\varphi$\emph{-tracial }(or
$\varphi$ is a \emph{tracial functional for }$\mathcal{A}$) if, for every
$x,y\in\mathcal{A}$,%
\[
\varphi\left(  xy\right)  =\varphi\left(  yx\right)  \text{ .}%
\]
We say that $\mathcal{A}$ is a \emph{maximal tracial algebra} for $\varphi$ in
$\mathcal{B}$ if $\mathcal{A}$ is tracial for $\varphi$ and no larger algebra
of $\mathcal{A}$ is tracial for $\varphi$.
\end{definition}

The following is our first simple, yet useful, result. If $\mathcal{B}$ is
unital algebra over a Hausdorff field $\mathbb{F}$, then by an \emph{algebra
topology} on $\mathcal{B}$ we mean a topology for which addition, scalar
multiplication and the multiplication in each variable are all continuous.

\begin{proposition}
Suppose $1\in\mathcal{B}$ is an algebra over a field $\mathbb{F}$,
$\varphi:\mathcal{B}\rightarrow\mathbb{F}$ is linear. Then

\begin{enumerate}
\item If $\mathcal{A}\subset\mathcal{B}$ is a $\varphi$-tracial algebra, then
$\mathcal{A+}\mathbb{F}1$ is also $\varphi$-tracial.

\item If $\left\{  \mathcal{A}_{i}:i\in I\right\}  $ is an increasingly
directed family of $\varphi$-tracial algebras, then so is $\bigcup_{i\in
I}\mathcal{A}_{i}.$

\item Every $\varphi$-tracial algebra is contained in a maximal $\varphi
$-tracial algebra.

\item Every maximal $\varphi$-tracial algebra is unital.

\item If $\mathbb{F}$ is a Hausdorff field, $\mathcal{A}$ is $\varphi
$-tracial, and $\varphi$ is continuous with respect to some Hausdorff algebra
topology $\mathcal{T}$ on $\mathcal{B}$ , then $\mathcal{A}^{-\mathcal{T}}$ is
also $\varphi$-tracial.

\item If $\mathbb{F}$ is a Hausdorff field, $\mathcal{A}$ is $\varphi
$-tracial, and $\varphi$ is continuous with respect to some Hausdorff algebra
topology $\mathcal{T}$ on $\mathcal{B}$, then every maximal $\varphi$-tracial
algebra in $\mathcal{B\ }$is $\mathcal{T}$-closed.
\end{enumerate}
\end{proposition}

\begin{proof}
$\left(  1\right)  .$ This follows from the fact that for every $a,b\in
\mathcal{A}$ and every $\alpha,\beta\in\mathbb{F}$ we have
\[
\left(  a+\alpha1\right)  \left(  b+\beta1\right)  -\left(  b+\beta1\right)
\left(  a+\alpha1\right)  =ab-ba.
\]

$\left(  2\right)  $-$\left(  4\right)  $. These are trivial.

$\left(  5\right)  .$ Suppose $a,b\in\mathcal{A}^{-\mathcal{T}}$. Then there
are nets $\left\{  a_{n}\right\}  ,$ $\left\{  b_{m}\right\}  $ in
$\mathcal{A}$ such that $a_{n}\rightarrow a$ and $b_{m}\rightarrow b$ with
respect to $\mathcal{T}$. Thus, for each $m$,
\[
a_{n}b_{m}\rightarrow ab_{m}\text{ and }b_{m}a_{n}\rightarrow b_{m}a.
\]
Thus%
\[
\varphi\left(  b_{m}a\right)  =\lim_{n}\varphi\left(  b_{m}a_{n}\right)
=\lim_{n}\varphi\left(  a_{n}b_{m}\right)  =\varphi\left(  ab_{m}\right)  .
\]
Similarly,%
\[
\varphi\left(  ba\right)  =\lim_{m}\varphi\left(  b_{m}a\right)  =\lim
_{m}\varphi\left(  ab_{m}\right)  =\varphi\left(  ab\right)
\]

$\left(  6\right)  .$ This easily follows from $\left(  5\right)  $.
\end{proof}

We now look at a simple matrix example which was the motivating example for
this paper.

\begin{example}
\label{matrix}Suppose $\mathbb{F}$ is any field. A linear functional
$\varphi:M_{2}\left(  \mathbb{F}\right)  \rightarrow\mathbb{F}$ is given by a
matrix $K\in M_{2}\left(  \mathbb{F}\right)  $ and is defined by
$\varphi\left(  T\right)  =Tr\left(  TK\right)  ,$ where $Tr$ is the usual
trace. In this case we write $\varphi=\varphi_{K}$. Here
\[
K=\left(
\begin{array}
[c]{cc}%
\varphi\left(  e_{11}\right)  & \varphi\left(  e_{21}\right) \\
\varphi\left(  e_{12}\right)  & \varphi\left(  e_{22}\right)
\end{array}
\right)  ,
\]
where $e_{ij}$ ($1\leq i,j\leq2$) are the standard matrix units for
$M_{2}\left(  \mathbb{F}\right)  $. The condition $\varphi_{K}\left(
1\right)  =1$ implies $Tr\left(  K\right)  =1.$ Also, if $\mathcal{A}$ is a
unital algebra in $M_{2}\left(  \mathbb{F}\right)  $ and $S\in M_{2}\left(
\mathbb{F}\right)  $ is invertible, then $\varphi_{K}$ is tracial for
$S\mathcal{A}S^{-1}$ if and only if $\varphi_{S^{-1}KS}$ is tracial for
$\mathcal{A}$. Hence the problem of finding the tracial unital functionals for
$\mathcal{A}$ is the same as for $S\mathcal{A}S^{-1}$.

For a given unital subalgebra $\mathcal{A}$ of $M_{2}\left(  \mathbb{F}%
\right)  $, we want to find all functionals $\varphi$ for which $\mathcal{A}$
is maximal $\varphi\,$-tracial.

If $\dim\left(  \mathcal{A}\right)  =1$, then $\mathcal{A}$ is abelian, but
not maximal abelian, so $\mathcal{A}$ is not maximal tracial for any functional.

If $\dim\left(  \mathcal{A}\right)  =4$, then the only tracial functional on
$\mathcal{A}$ is $\lambda Tr\left(  {}\right)  $ for some $\lambda$ in
$\mathbb{F}$. If the characteristic of $\mathbb{F}$ is $2$, then $\lambda
Tr\left(  1\right)  =0\neq1$, and no unital $\varphi$ exists, and if the
characteristic of $\mathbb{F}$ is not $2,$ then $\varphi=\frac{1}{2}Tr\left(
{}\right)  $ is the only tracial unital functional on $\mathcal{A}$.

Next, suppose $\dim\left(  \mathcal{A}\right)  =2$. Then there is a matrix
$T\notin\mathbb{F}I_{2}$ such that $\mathcal{A}=\mathbb{F}I_{2}+\mathbb{F}T,$
that is, the algebra of polynomials in $T.$ We want to find the maximal
$\varphi$-tracial algebras in $M_{2}\left(  \mathbb{F}\right)  $. Let
$p\left(  \lambda\right)  =\lambda^{2}-Tr\left(  T\right)  \lambda+\det\left(
T\right)  $ be the characteristic polynomial of $T$.

If $p\left(  T\right)  $ splits, then either $\mathcal{A}$ is similar to
\[
\mathcal{D}_{2}=\left\{  \left(
\begin{array}
[c]{cc}%
a & 0\\
0 & b
\end{array}
\right)  :a,b\in\mathbb{F}\right\}  \text{ or }\mathcal{T}_{2}=\left\{
\left(
\begin{array}
[c]{cc}%
a & b\\
0 & a
\end{array}
\right)  :a,b\in\mathbb{F}\right\}  .
\]
If $\varphi\neq\frac{1}{2}Tr\left(  {}\right)  $, then $\mathcal{D}_{2}$ or
$\mathcal{T}_{2}$ is maximal $\varphi$-tracial only if there is a
$3$-dimensional algebra $\mathcal{D}$ that contains $\mathcal{D}_{2}$ or
$\mathcal{T}_{2}$ for which $\varphi$ is tracial. The only $3$-dimensional
algebras containing $\mathcal{D}_{2}$ are
\[
\mathcal{U}_{2}=\left\{  \left(
\begin{array}
[c]{cc}%
a & b\\
0 & c
\end{array}
\right)  :a,b,c\in\mathbb{F}\right\}  \text{ or }\mathcal{L}_{2}=\left\{
\left(
\begin{array}
[c]{cc}%
a & 0\\
b & c
\end{array}
\right)  :a,b\in\mathbb{F}\right\}  .
\]
Since $e_{11}e_{12}=e_{12}$ and $e_{12}e_{11}=0,$ then a functional $\varphi$
is tracial on $\mathcal{U}_{2}$ if and only if $\varphi\left(  e_{12}\right)
=0.$ Similarly, a functional $\varphi$ is tracial on $\mathcal{L}_{2}$ if and
only if $\varphi\left(  e_{21}\right)  =0.$ Thus $\mathcal{D}_{2}$ is maximal
tracial for $\varphi$ if and only if $\varphi\left(  e_{12}\right)  \neq0$ and
$\varphi\left(  e_{21}\right)  \neq0$. Also the only $3$-dimensional algebra
containing $\mathcal{T}_{2}$ is $\mathcal{U}_{2}$, so $\mathcal{T}_{2}$ is
maximal $\varphi$-tracial if and only if $\varphi\left(  e_{12}\right)
\neq0.$

If $p\left(  \lambda\right)  $ doesn't split, then $\mathcal{A}$ is isomorphic
to $\mathbb{F}\left[  x\right]  /\left\langle p\left(  x\right)  \right\rangle
$, which is a field. If $\mathcal{D}$ is an algebra and $\mathcal{A}%
\varsubsetneq\mathcal{D}$, then%
\[
\dim_{\mathbb{F}}\left(  \mathcal{D}\right)  =2\dim_{\mathcal{A}}\left(
\mathcal{D}\right)  \geq4.
\]
Hence $\mathcal{A}$ is a maximal subalgebra of $M_{2}\left(  \mathbb{F}%
\right)  $. Thus if $\varphi\neq\frac{1}{2}Tr\left(  {}\right)  $, then
$\mathcal{A}$ is maximal $\varphi$-tracial.

If $\dim\left(  \mathcal{A}\right)  =3,$ then the characteristic polynomial of
every $T\in\mathcal{A}$ splits, and $\mathcal{A}$ must be similar to
$\mathcal{U}_{2}$, which is maximal $\varphi$-tracial if and only if
$\varphi\neq\frac{1}{2}Tr\left(  {}\right)  $ and $\varphi\left(
e_{12}\right)  \neq0.$
\end{example}

\section{General Dual Pairs}

Dual pairs over a Hausdorff field were studied in \cite{GenRef}. Suppose that
$\mathbb{F}$ is a topological field with a Hausdorff topology, e.g., a
subfield of the complex numbers with the usual topology, or an arbitrary field
with the discrete topology. Next suppose that $X$ is a vector space over
$\mathbb{F}$ and $Y$ is a vector space of linear maps from $X$ to $\mathbb{F}$
such that $\cap\left\{  \ker f:f\in Y\right\}  =\left\{  0\right\}  $ (that
is, $Y$ separates the points of $X$). Such a pair $(X,Y)$ is called a
\emph{dual pair} over $\mathbb{F}$. In \cite{GenRef} dual pairs were used to
unify some of the results on different versions of reflexivity for algebras
and linear spaces of $\mathcal{B}\left(  \mathcal{H}\right)  $. In this
section, for a dual pair $(X,Y)$, we will prove some theorems for continuous
linear maps on $X$ and $Y$ that will provide several important results in
special cases of dual pairs.

\begin{definition}
Let $X$ be a vector space over a field $\mathbb{F}$, $X^{\prime}$ be the dual
space of $X,$ and let $L\left(  X\right)  $ be the set of linear maps on $X.$
For a linear map $\alpha:X\rightarrow\mathbb{F}$ and $x_{0}\in X$ we define
the linear maps $x_{0}\otimes\alpha:L\left(  X\right)  \rightarrow\mathbb{F}$
and $\alpha\otimes x_{0}:L\left(  X^{\prime}\right)  \rightarrow\mathbb{F}$ by%
\[
\left(  x_{0}\otimes\alpha\right)  \left(  T\right)  =\alpha\left(  T\left(
x_{0}\right)  \right)  \text{ and }\left(  \alpha\otimes x_{0}\right)  \left(
S\right)  =S\left(  \alpha\right)  \left(  x_{0}\right)  \text{.}%
\]
For a dual pair $(X,Y)$ over a Hausdorff field $\mathbb{F}$ we let
$\sigma\left(  X,Y\right)  $-topology on $X$ be the smallest topology on $X$
that makes all of the maps in $Y$ continuous on $X$, and we let the
$\sigma\left(  Y,X\right)  $-topology on $Y$ be the smallest topology for
which the map $f\rightarrow f\left(  x\right)  $ is continuous on $Y$ for each
$x$ in $X$. For $A\subseteq X$ and $B\subseteq Y$, we define
\[
A^{\bot}=\left\{  f\in Y:f|_{A}=0\right\}  \text{ and }B_{\bot}=\cap\left\{
\ker\left(  f\right)  :f\in B\right\}  .
\]
For a subset $D$ of either $X$ or $Y$, we denote the linear span of $D$ by
$sp(D).$
\end{definition}

It was shown in \cite[Proposition 1.1]{GenRef} that
\[
\left(  A^{\perp}\right)  _{\perp}=sp\left(  A\right)  ^{-\sigma\left(
X,Y\right)  }\text{ and }\left(  B_{\perp}\right)  ^{\perp}=sp\left(
B\right)  ^{-\sigma\left(  Y,X\right)  }%
\]
always hold. In particular, if $0\neq x\in X$, there is an $f\in Y$ such that
$f\left(  x\right)  =1$. It was also shown in \cite[Proposition 1.1]{GenRef}
that if $f:X\rightarrow\mathbb{F}$ is linear and $\sigma\left(  X,Y\right)  $
continuous, then $f\in Y$.

Suppose $\left(  X,Y\right)  $ is a dual pair over a Hausdorff field
$\mathbb{F}$. Let $L_{\sigma\left(  X,Y\right)  }\left(  X\right)  $ denote
the linear transformations $T:X\rightarrow X$ that are $\sigma\left(
X,Y\right)  $-$\sigma\left(  X,Y\right)  $ continuous. We give $L\left(
X\right)  $ the weak operator topology (WOT) defined as the topology of
pointwise $\sigma\left(  X,Y\right)  $ convergence. This means that the weak
operator topology is the weak topology on $L_{\sigma\left(  X,Y\right)
}\left(  X\right)  $ induced by the set of linear functionals $\mathcal{E}%
=\left\{  x\otimes y:x\in X,y\in Y\right\}  $. Consequently, if $\mathcal{W}%
=sp\left(  \mathcal{E}\right)  $, then $\left(  L_{\sigma\left(  X,Y\right)
}\left(  X\right)  ,\mathcal{W}\right)  $ is a dual pair and the weak operator
topology is the $\sigma\left(  L_{\sigma\left(  X,Y\right)  }\left(  X\right)
,\mathcal{W}\right)  $ topology. Thus, by \cite[Proposition 1.1]{GenRef}, a
linear map $\varphi:L_{\sigma\left(  X,Y\right)  }\left(  X\right)
\rightarrow\mathbb{F}$ is WOT continuous if and only if it is in $\mathcal{W}%
$, i.e., of the form $\varphi=\sum_{k=1}^{n}x_{k}\otimes\alpha_{k}$ for
$x_{1},\ldots,x_{n}\in X$ and $\alpha_{1},\ldots,\alpha_{n}\in Y$.

If we denote the duality of $X$ and $Y$ with the notation%
\[
\alpha\left(  x\right)  =\left\langle x,\alpha\right\rangle
\]
for $x\in X$ and $\alpha\in Y$, then, for every $T\in L_{\sigma\left(
X,Y\right)  }\left(  X\right)  $, there is a $T^{\#}\in L_{\sigma\left(
Y,X\right)  }\left(  Y\right)  $ such that, for every $x\in X$ and every
$\alpha\in Y$,%
\[
\left\langle Tx,\alpha\right\rangle =\left\langle x,T^{\#}\alpha\right\rangle
\text{.}%
\]
Clearly, $T^{\#\#}=T,$ $1^{\#}=1,$ $\left(  ST\right)  ^{\#}=T^{\#}S^{\#},$
$\left(  \lambda S+T\right)  ^{\#}=\lambda S^{\#}+T^{\#}$. If $\mathcal{S}%
\subset L_{\sigma\left(  X,Y\right)  }\left(  X\right)  $, we let
$\mathcal{S}^{\#}$ denote $\left\{  S^{\#}:S\in\mathcal{S}\right\}  $. If
$\varphi$ is a WOT continuous linear functional on $L_{\sigma\left(
X,Y\right)  }\left(  X\right)  $, we define the adjoint functional
$\varphi^{\#}$ on $L_{\sigma\left(  Y,X\right)  }\left(  Y\right)  $ by%
\[
\varphi^{\#}\left(  T^{\#}\right)  =\varphi\left(  T\right)  .
\]
Clearly $\left(  x\otimes\alpha\right)  ^{\#}=\alpha\otimes x$.

The following lemma can be proved easily.

\begin{lemma}
\label{5}Suppose that $(X,Y)$ is a dual pair, $\mathcal{A}$ is a unital
subalgebra of $L_{\sigma\left(  X,Y\right)  }\left(  X\right)  ,$ and that
$\varphi$ is a WOT continuous linear functional such that $\varphi\left(
1\right)  =1$. Then $\mathcal{A}$ is $\varphi$-tracial if and only if
$\mathcal{A}^{\#}$ is $\varphi^{\#}$-tracial and $\mathcal{A}$ is maximal
$\varphi$-tracial if and only if $\mathcal{A}^{\#}$ is maximal $\varphi^{\#}$-tracial.
\end{lemma}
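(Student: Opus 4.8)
The plan is to establish the claimed equivalences by exploiting the symmetry of the adjoint operation $T \mapsto T^{\#}$, which has already been shown to be a bijective anti-isomorphism from $L_{\sigma(X,Y)}(X)$ onto $L_{\sigma(Y,X)}(Y)$ satisfying $T^{\#\#} = T$, $1^{\#} = 1$, $(ST)^{\#} = T^{\#}S^{\#}$, and $(\lambda S + T)^{\#} = \lambda S^{\#} + T^{\#}$. First I would record that $\mathcal{A}^{\#}$ is again a unital algebra: linearity of $\#$ gives closure under linear combinations, the anti-multiplicativity $(ST)^{\#} = T^{\#}S^{\#}$ shows $\mathcal{A}^{\#}$ is closed under products (since $\mathcal{A}$ is closed under products and hence under products taken in either order), and $1^{\#} = 1$ gives unitality. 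The same computation applied to $\varphi^{\#}$, whose well-definedness follows from the bijectivity of $\#$ on WOT-continuous functionals noted just before the lemma, shows $\varphi^{\#}(1) = \varphi^{\#}(1^{\#}) = \varphi(1) = 1$.

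The heart of the tracial equivalence is the identity, valid for all $S, T \in \mathcal{A}$,
\[
\varphi^{\#}\bigl(S^{\#}T^{\#}\bigr) = \varphi^{\#}\bigl((TS)^{\#}\bigr) = \varphi(TS),
\]
which uses anti-multiplicativity in the first step and the definition of $\varphi^{\#}$ in the second. From this I would argue: if $\mathcal{A}$ is $\varphi$-tracial, then for arbitrary elements $a^{\#}, b^{\#}$ of $\mathcal{A}^{\#}$ (with $a, b \in \mathcal{A}$) we have
\[
\varphi^{\#}\bigl(a^{\#} b^{\#}\bigr) = \varphi(ba) = \varphi(ab) = \varphi^{\#}\bigl(b^{\#} a^{\#}\bigr),
\]
so $\mathcal{A}^{\#}$ is $\varphi^{\#}$-tracial. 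The reverse implication is immediate by the same token, or simply by applying the forward direction to the pair $(\mathcal{A}^{\#}, \varphi^{\#})$ and invoking $\mathcal{A}^{\#\#} = \mathcal{A}$ and $\varphi^{\#\#} = \varphi$.

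For the maximality statement, the key observation is that $\#$ is an inclusion-reversing-preserving bijection on unital subalgebras: $\mathcal{A} \subseteq \mathcal{B}$ if and only if $\mathcal{A}^{\#} \subseteq \mathcal{B}^{\#}$, because $\#$ is a bijection with $\#\# = \mathrm{id}$. Combined with the tracial equivalence established above, this turns the maximality condition into its mirror image. Concretely, I would suppose $\mathcal{A}$ is maximal $\varphi$-tracial and let $\mathcal{C}$ be any $\varphi^{\#}$-tracial algebra with $\mathcal{A}^{\#} \subseteq \mathcal{C}$; then $\mathcal{C}^{\#}$ is $\varphi$-tracial (by the equivalence, since $\mathcal{C}^{\#\#} = \mathcal{C}$) and contains $\mathcal{A}^{\#\#} = \mathcal{A}$, so maximality forces $\mathcal{C}^{\#} = \mathcal{A}$, hence $\mathcal{C} = \mathcal{A}^{\#}$, proving $\mathcal{A}^{\#}$ is maximal. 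The converse again follows by the $\#\#$-symmetry. I do not anticipate a genuine obstacle here; the only point demanding a moment of care is verifying that $\varphi^{\#}$ is a well-defined WOT-continuous functional so that the phrase ``$\mathcal{A}^{\#}$ is $\varphi^{\#}$-tracial'' is even meaningful — but this is exactly what was guaranteed by the preceding discussion, where every WOT-continuous functional was identified with an element of $\mathcal{W}$ and the adjoint was defined via $\varphi^{\#}(T^{\#}) = \varphi(T)$.
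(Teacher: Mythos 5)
Your proof is correct, and it is exactly the routine argument the paper has in mind: the paper offers no written proof at all, stating only that the lemma ``can be proved easily,'' and your use of the anti-multiplicativity, involutivity, and unitality of $\#$ (together with $\varphi^{\#\#}=\varphi$ and the inclusion-preserving bijectivity of $\#$ on subalgebras) supplies precisely those omitted details.
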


Note that every abelian algebra is $\varphi$-tracial for every $\varphi$. The
next theorem characterizes when an abelian algebra is maximal $\varphi
$-tracial provided $\varphi$ is a rank-one functional.

\begin{theorem}
\label{10}Suppose that $(X,Y)$ is a dual pair, $\mathcal{A}$ is an abelian
unital subalgebra of $L_{\sigma\left(  X,Y\right)  }\left(  X\right)  $, and
that $x\in X,$ $\alpha\in Y$ with $\left(  x\otimes\alpha\right)  \left(
1\right)  =\alpha\left(  x\right)  =1.$ Then $\mathcal{A}$ is maximal tracial
with respect to $x\otimes\alpha$ if and only if the following conditions hold:

\begin{enumerate}
\item $\mathcal{A}$ is maximal abelian in $L_{\sigma\left(  X,Y\right)
}\left(  X\right)  ,$

\item $\left[  \mathcal{A}x\right]  ^{-\sigma\left(  X,Y\right)  }=X,$ and

\item $\left[  \mathcal{A}^{\#}\alpha\right]  ^{-\sigma\left(  Y,X\right)
}=Y$.
\end{enumerate}
\end{theorem}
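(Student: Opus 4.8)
The plan is to prove both directions by analyzing how one might enlarge $\mathcal{A}$ to a strictly larger $(x\otimes\alpha)$-tracial algebra, and showing such enlargement is blocked precisely by conditions (1)--(3). Throughout I would exploit the adjoint symmetry from Lemma \ref{5}: since $(x\otimes\alpha)^{\#}=\alpha\otimes x$ and $(\alpha\otimes x)(1)=x(\alpha)=\langle x,\alpha\rangle=1$, conditions (2) and (3) are interchanged under the $\#$-operation, so it suffices to handle (2) directly and obtain (3) by passing to $\mathcal{A}^{\#}$.

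First I would establish the key computational identity that governs tracial-ness with respect to a rank-one functional. For $S,T\in L_{\sigma(X,Y)}(X)$ one has
\[
(x\otimes\alpha)(ST)-(x\otimes\alpha)(TS)=\langle STx,\alpha\rangle-\langle TSx,\alpha\rangle=\langle Tx,S^{\#}\alpha\rangle-\langle Sx,T^{\#}\alpha\rangle.
\]
So $\mathcal{A}$ together with a new element $T$ generates a tracial algebra only if $\langle Tx,S^{\#}\alpha\rangle=\langle Sx,T^{\#}\alpha\rangle$ holds for all $S$ in the algebra generated by $\mathcal{A}\cup\{T\}$; I would record this as the basic obstruction to enlarging $\mathcal{A}$.

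For the forward direction ($\mathcal{A}$ maximal tracial $\Rightarrow$ (1)--(3)): since $\mathcal{A}$ is abelian and any abelian algebra is tracial, a maximal abelian overalgebra would still be tracial, so maximality of $\mathcal{A}$ as a tracial algebra forces (1). For (2), I would suppose $[\mathcal{A}x]^{-\sigma(X,Y)}=M\subsetneq X$ and use the duality facts from \cite[Proposition 1.1]{GenRef} (namely $(M^{\perp})_{\perp}=M$ and that nonzero vectors are separated by $Y$) to produce $\alpha_0\in M^{\perp}$, $\alpha_0\neq0$, and a vector $x_0\notin M$ with $\langle x_0,\alpha_0\rangle=1$; then the rank-one map $x_0\otimes\alpha_0$ (or a suitable combination built so it vanishes on $\mathcal{A}x$) can be adjoined to $\mathcal{A}$ while preserving the tracial identity above, because $S^{\#}\alpha$ behaves controllably on $\mathcal{A}x$. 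This contradicts maximality, giving (2); then (3) follows by applying the same argument to $\mathcal{A}^{\#}$ via Lemma \ref{5}.

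For the converse ((1)--(3) $\Rightarrow$ maximal tracial), I would assume $T\notin\mathcal{A}$ makes $\mathcal{A}+\mathbb{F}T$ (indeed the algebra it generates) tracial and derive $T\in\mathcal{A}$. The tracial condition applied with $S$ ranging over $\mathcal{A}$ gives $\langle Tx,S^{\#}\alpha\rangle=\langle Sx,T^{\#}\alpha\rangle$; since $\mathcal{A}$ is abelian and contains $1$, I would first test $S=A\in\mathcal{A}$ and then iterate/polarize using products to pin down $T$ on a dense set. The strategy is to show $TA=AT$ for all $A\in\mathcal{A}$, so that $T\in\mathcal{A}'=\mathcal{A}$ by maximal abelianness (1). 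To get commutation, I expect to use (2) to say that $\{Ax:A\in\mathcal{A}\}$ is $\sigma(X,Y)$-dense and (3) dually that $\{A^{\#}\alpha\}$ is dense, so the bilinear defect $\langle(TA-AT)x,\alpha\rangle$-type expressions, evaluated across enough $S$, vanish on a separating family and hence $TA-AT=0$. The hard part will be the converse direction, specifically converting the single scalar tracial identities into the operator equation $TA=AT$; the difficulty is that the tracial condition only sees the functional $\alpha$ paired against $\mathcal{A}^{\#}\alpha$ and the vector $x$ paired against $\mathcal{A}x$, so I must leverage both density conditions (2) and (3) \emph{simultaneously} — using products $A_1TA_2$ inside the generated algebra to route information through the cyclic vector $x$ and cyclic covector $\alpha$ — to upgrade pointwise scalar equalities to the full commutation relation needed to invoke maximal abelianness.
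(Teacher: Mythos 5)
Your overall architecture (transfer of condition (3) to condition (2) via Lemma \ref{5}, deriving (1) from the fact that abelian overalgebras are automatically tracial, and in the converse aiming at $TA=AT$ so that maximal abelianness finishes) matches the paper, but your forward-direction argument for (2) is a genuinely different construction from the paper's, and it does work. Where the paper enlarges $\mathcal{A}$ to
$\mathcal{A}_{1}=\left\{  T:T\left(  M\right)  \subset M,\ T|_{M}\in
\mathcal{A}|_{M}\right\}  $
(tracial because $x\otimes\alpha$ only sees restrictions to $M$, and visibly non-abelian when $M\neq X$), you adjoin a single rank-one operator $Rz=\left\langle z,\alpha_{0}\right\rangle x_{0}$ with $\alpha_{0}\in M^{\perp}$, $x_{0}\notin M$, $\left\langle x_{0},\alpha_{0}\right\rangle =1$. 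This succeeds: since $\alpha_{0}$ annihilates $\mathcal{A}x\subset M$, one checks $RAR=\left\langle Ax_{0},\alpha_{0}\right\rangle R$, so the generated algebra is $\mathcal{A}+sp\left(  \mathcal{A}R\mathcal{A}\right)  $, and $\left(  x\otimes\alpha\right)  \left(  BRC\right)  =\left\langle Cx,\alpha_{0}\right\rangle \alpha\left(  Bx_{0}\right)  =0$ kills every word containing $R$, in either order, giving traciality. One caveat you must add: to contradict maximality you need $R\notin\mathcal{A}$, and as written your construction can land inside $\mathcal{A}$ (take $\mathcal{A}=\mathcal{D}_{2}$, $x=e_{1}$; then $R=e_{22}\in\mathcal{A}$). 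This is easily repaired: if $x_{0}\neq x_{0}^{\prime}$ both pair to $1$ with $\alpha_{0}$, the two resulting rank-one operators $R_{1},R_{2}$ satisfy $R_{1}R_{2}=R_{1}$ and $R_{2}R_{1}=R_{2}$, so they do not commute and at most one of them lies in the abelian algebra $\mathcal{A}$; choose the other. The paper's $\mathcal{A}_{1}$ avoids this issue entirely, since it derives the contradiction from non-abelianness rather than from exhibiting a specific new element.

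The genuine gap is in the converse, exactly where you flagged ``the hard part'': you never pass from the symmetric identity $\left\langle Tx,S^{\#}\alpha\right\rangle =\left\langle Sx,T^{\#}\alpha\right\rangle $ to a \emph{vanishing} statement, and symmetry alone cannot do it. The missing idea is a three-fold application of traciality of the enlarged algebra, interleaved with commutativity of $\mathcal{A}$: for all $A,B,C\in\mathcal{A}$,
\begin{align*}
\left(  x\otimes\alpha\right)  \left(  B\left(  AT-TA\right)  C\right)   &
=\left(  x\otimes\alpha\right)  \left(  \left(  AT-TA\right)  CB\right) \\
&  =\left(  x\otimes\alpha\right)  \left(  AT\left(  CB\right)  \right)
-\left(  x\otimes\alpha\right)  \left(  T\left(  CB\right)  A\right) \\
&  =\left(  x\otimes\alpha\right)  \left(  \left(  CB\right)  AT\right)
-\left(  x\otimes\alpha\right)  \left(  T\left(  CBA\right)  \right)  =0,
\end{align*}
where the first equality is traciality for the pair $\left(  B,\left(  AT-TA\right)  C\right)  $, the second uses $TACB=T\left(  CB\right)  A$ (commutativity of $\mathcal{A}$), the third is traciality for $\left(  AT,CB\right)  $, and the last is traciality for $\left(  CBA,T\right)  $ together with $\left(  CB\right)  A=CBA$. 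This yields $\left\langle \left(  AT-TA\right)  Cx,B^{\#}\alpha\right\rangle =0$ for all $B,C\in\mathcal{A}$. Now fix $A,C$: evaluation at the fixed vector $\left(  AT-TA\right)  Cx$ is $\sigma\left(  Y,X\right)  $-continuous on $Y$ and vanishes on $sp\left(  \mathcal{A}^{\#}\alpha\right)  $, hence by (3) on all of $Y$, so $\left(  AT-TA\right)  Cx=0$ since $Y$ separates points. Then $AT-TA$ is $\sigma\left(  X,Y\right)  $-continuous and vanishes on $\mathcal{A}x$, hence by (2) on all of $X$, so $AT=TA$; finally (1) gives $T\in\mathcal{A}^{\prime}=\mathcal{A}$. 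Your proposal names the right tools (products $A_{1}TA_{2}$, densities (2) and (3), maximal abelianness), but without the displayed computation the argument does not close.
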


\begin{proof}
Suppose $\mathcal{A}$ is maximal tracial with respect to $x\otimes\alpha$.
Clearly, $\mathcal{A}$ is maximal abelian. Assume, via contradiction, that
$M=\left[  \mathcal{A}x\right]  ^{-\sigma\left(  X,Y\right)  }\neq X$. Let
\[
\mathcal{A}_{1}=\left\{  T\in L_{\sigma\left(  X,Y\right)  }\left(  X\right)
:T\left(  M\right)  \subset M\text{, }T|_{M}\in\mathcal{A}|_{M}\right\}  .
\]
Then $\mathcal{A}_{1}$ is tracial with respect to $x_{0}\otimes\alpha$ and,
since $M\neq X$, $\mathcal{A}_{1}$ is not abelian. This contradiction implies
$\left[  \mathcal{A}x\right]  ^{-\sigma\left(  X,Y\right)  }=X$.

From Lemma \ref{5} we know that if $\mathcal{A}$ is maximal $\left(
x\otimes\alpha\right)  $-tracial, then $\mathcal{A}^{\#}$ is maximal $\left(
x\otimes\alpha\right)  ^{\#}$-tracial. Since $\left(  x\otimes\alpha\right)
^{\#}=\alpha\otimes x$, we conclude $\left[  \mathcal{A}^{\#}\alpha\right]
^{-\sigma\left(  Y,X\right)  }=Y$.

Now suppose $\left(  1\right)  $-$\left(  3\right)  $ hold. Suppose $T\in
L_{\sigma\left(  X,Y\right)  }\left(  X\right)  $ and the algebra generated by
$\mathcal{A}\cup\left\{  T\right\}  $ is tracial for $x\otimes\alpha$. Then,
for every $A,B,C\in\mathcal{A}$, we have
\begin{align*}
\left(  x\otimes\alpha\right)  \left(  B\left(  AT-TA\right)  C\right)   &
=\left(  x\otimes\alpha\right)  \left(  \left(  AT-TA\right)  CB\right) \\
&  =\left(  x\otimes\alpha\right)  \left(  AT\left(  CB\right)  -T\left(
CB\right)  A\right) \\
&  =\left(  x\otimes\alpha\right)  \left(  \left(  CB\right)  AT-T\left(
CB\right)  A\right) \\
&  =0.
\end{align*}
This means%
\[
\left[  AT-TA\right]  \left(  Cx\right)  \in\ker\left(  B^{\prime}%
\alpha\right)
\]
for every $B,C\in\mathcal{A}$. It follows from $\left(  2\right)  $ and
$\left(  3\right)  $ that $AT=TA$. Hence, by $\left(  1\right)  $,
$T\in\mathcal{A}$.
\end{proof}

A subalgebra $\mathcal{A}$ of $L_{\sigma\left(  X,Y\right)  }\left(  X\right)
$ is called \emph{transitive} if the only $\sigma\left(  X,Y\right)  $-closed
$\mathcal{A}$-invariant linear subspaces are $\left\{  0\right\}  $ and $X$.
It is easily shown that $M$ is $\mathcal{A}$-invariant if and only if
$M^{\perp}$ is $\mathcal{A}^{\#}$-invariant. Thus $\mathcal{A}$ is transitive
in $L_{\sigma\left(  X,Y\right)  }\left(  X\right)  $ if and only if
$\mathcal{A}^{\#}$ is transitive in $L_{\sigma\left(  Y,X\right)  }\left(
Y\right)  $.

\begin{theorem}
\label{15}A subalgebra $\mathcal{A}$ of $L_{\sigma\left(  X,Y\right)  }\left(
X\right)  $ is maximal abelian and transitive if and only if $\mathcal{A}$ is
maximal $x\otimes\alpha$-tracial for every $x\in X$ and $\alpha\in Y$ with
$\alpha\left(  x\right)  =\left(  x\otimes\alpha\right)  \left(  1\right)  =1$.
\end{theorem}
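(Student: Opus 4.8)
The plan is to reduce both implications to Theorem \ref{10}, whose three conditions (maximal abelianness, $\sigma(X,Y)$-density of $\mathcal{A}x$, and $\sigma(Y,X)$-density of $\mathcal{A}^{\#}\alpha$) are precisely what separate an abelian algebra from maximality for a rank-one functional. The auxiliary tools are the adjoint anti-isomorphism $T\mapsto T^{\#}$ together with the remark recorded just before the statement, namely that $\mathcal{A}$ is transitive in $L_{\sigma(X,Y)}(X)$ if and only if $\mathcal{A}^{\#}$ is transitive in $L_{\sigma(Y,X)}(Y)$.

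For the forward direction, assume $\mathcal{A}$ is maximal abelian and transitive and fix $x\in X$, $\alpha\in Y$ with $\alpha(x)=1$. Condition $(1)$ of Theorem \ref{10} is immediate. For condition $(2)$ I would observe that $M=[\mathcal{A}x]^{-\sigma(X,Y)}$ is a $\sigma(X,Y)$-closed, $\mathcal{A}$-invariant subspace (invariance uses that each $A\in\mathcal{A}$ is $\sigma(X,Y)$-continuous and $\mathcal{A}$ is an algebra), and that it is nonzero since $x=1\cdot x\in M$ and $\alpha(x)=1$ forces $x\neq0$; transitivity then gives $M=X$. Condition $(3)$ is the mirror image: $[\mathcal{A}^{\#}\alpha]^{-\sigma(Y,X)}$ is a $\sigma(Y,X)$-closed, $\mathcal{A}^{\#}$-invariant, nonzero subspace of $Y$, so transitivity of $\mathcal{A}^{\#}$ forces it to equal $Y$. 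Theorem \ref{10} then yields that $\mathcal{A}$ is maximal $x\otimes\alpha$-tracial, and since the pair was arbitrary we are done.

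For the converse, the first task, and the one I expect to be the main obstacle, is to show $\mathcal{A}$ is abelian, since Theorem \ref{10} presupposes this. Here I would unwind the tracial condition: being $x\otimes\alpha$-tracial forces $\alpha\bigl((AB-BA)x\bigr)=0$ for all $A,B\in\mathcal{A}$. Given any $x\neq0$ and any $\beta\in Y$ with $\beta(x)\neq0$, rescaling $\beta$ so that $\beta(x)=1$ shows this identity in fact holds for every $\beta\in Y$ not annihilating $x$. A short perturbation argument, replacing a functional $\delta$ with $\delta(x)\neq0$ by $\delta+t\beta$ for a suitable scalar $t$ and a $\beta$ detecting $(AB-BA)x$, upgrades this to $\beta\bigl((AB-BA)x\bigr)=0$ for all $\beta\in Y$; since $Y$ separates points we obtain $(AB-BA)x=0$ for every $x$, whence $AB=BA$.

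Once $\mathcal{A}$ is known to be abelian I would apply Theorem \ref{10} to a single valid pair to recover condition $(1)$, which is exactly maximal abelianness. Transitivity then follows from condition $(2)$: if $M$ is a nonzero $\sigma(X,Y)$-closed $\mathcal{A}$-invariant subspace, choose $0\neq x_{0}\in M$ and $\alpha_{0}\in Y$ with $\alpha_{0}(x_{0})=1$; then $\mathcal{A}x_{0}\subseteq M$, so $X=[\mathcal{A}x_{0}]^{-\sigma(X,Y)}\subseteq M$, forcing $M=X$. The only delicate points are the field-level manipulations in the abelianness step, which remain valid over an arbitrary Hausdorff field because nonzero scalars are invertible and a field has at least two elements, and the bookkeeping that the normalization $\alpha(x)=1$ is always achievable precisely because $Y$ separates the points of $X$.
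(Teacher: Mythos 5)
Your proposal is correct and takes essentially the same route as the paper: both directions reduce to Theorem \ref{10} (using the remark that $\mathcal{A}$ is transitive iff $\mathcal{A}^{\#}$ is, and maximality for a single pair to get maximal abelianness and the density conditions), with the only substantive work being the abelianness step in the converse. Your perturbation argument with $\delta+t\beta$ is the same functional-combination trick the paper performs with $\left\{ \alpha_{1},\alpha_{2},\alpha_{1}+\alpha_{2}\right\} $, merely organized as a direct proof rather than a proof by contradiction.
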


\begin{proof}
Suppose $\mathcal{A}$ is maximal $x\otimes\alpha$-tracial for every $x\in X$
and $\alpha\in Y$ with $\alpha\left(  x\right)  =\left(  x\otimes
\alpha\right)  \left(  1\right)  =1$. Let $S,T\in\mathcal{A}$. Assume, via
contradiction, that $ST\neq TS.$ Then there is an $x\in X$ such that $\left(
ST-TS\right)  x\neq0$. We know that there are $\alpha_{1},\alpha_{2}\in Y$
such that $\alpha_{1}\left(  \left(  ST-TS\right)  x\right)  \neq0$ and
$\alpha_{2}\left(  x\right)  \neq0$. Thus there is an $\alpha\in\left\{
\alpha_{1},\alpha_{2},\alpha_{1}+\alpha_{2}\right\}  $ such that
$\alpha\left(  x\right)  \neq0$ and $\alpha\left(  \left(  ST-TS\right)
x\right)  \neq0$. By dividing by $\alpha\left(  x\right)  $ we can assume that
$\alpha\left(  x\right)  =1$ and thus
\[
\left(  x\otimes\alpha\right)  \left(  ST-TS\right)  =\alpha\left(  \left(
ST-TS\right)  x\right)  \neq0,
\]
a contradiction. Thus $\mathcal{A}$ is abelian. Since $\mathcal{A}$ is maximal
tracial for every $x\otimes\alpha$ with $\alpha\left(  x\right)  =1,$ it
follows that $\mathcal{A}$ is maximal abelian.

Next suppose $x\in X$ and $x\neq0.$ Then there is an $\alpha\in Y$ such that
$\alpha\left(  x\right)  =1.$ Since $\mathcal{A}$ is abelian and maximal
$\left(  x\otimes\alpha\right)  $-tracial, we know from Theorem \ref{10} that
$\left[  \mathcal{A}x\right]  ^{-\sigma\left(  X,Y\right)  }=X$. Thus
$\mathcal{A}$ is transitive.

The proof of the other direction is straightforward by using Theorem \ref{10}.
\end{proof}

\begin{corollary}
\label{20}Suppose $\mathcal{H}$ is a Hilbert space and $\mathcal{A}\subset
B\left(  \mathcal{H}\right)  $ is a unital weak-operator closed algebra. Then
$\mathcal{A}$ is maximal $e\otimes e$-tracial for every unit vector $e$ in
$\mathcal{H}$ if and only if $\mathcal{A}$ is abelian and transitive.
\end{corollary}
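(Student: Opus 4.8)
The plan is to deduce the statement from Theorem \ref{10} after specializing the dual-pair machinery to the Hilbert-space setting and inserting one polarization identity. First I would record the dictionary: take $X=Y=\mathcal{H}$ with the duality $\langle x,\alpha\rangle$ given by the inner product, so that the $\sigma(X,Y)$-topology is the weak topology, $L_{\sigma(X,Y)}(\mathcal{H})=\mathcal{B}(\mathcal{H})$, the paper's WOT agrees with the usual weak operator topology, $T^{\#}=T^{\ast}$, and the functional $x\otimes y$ is $T\mapsto\langle Tx,y\rangle$. In particular $e\otimes e$ is the vector functional $T\mapsto\langle Te,e\rangle$, and $(e\otimes e)(1)=\|e\|^{2}$, so the normalization $(e\otimes e)(1)=1$ is exactly the condition that $e$ be a unit vector. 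Under this dictionary, conditions (2) and (3) of Theorem \ref{10}, applied with $x=e$ and $\alpha=\langle\cdot,e\rangle$, read $\overline{\mathcal{A}e}=\mathcal{H}$ and $\overline{\mathcal{A}^{\ast}e}=\mathcal{H}$.

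The one genuinely Hilbertian ingredient is polarization. Since
\[
\langle Tx,y\rangle=\frac{1}{4}\sum_{k=0}^{3}i^{k}\langle T(x+i^{k}y),x+i^{k}y\rangle ,
\]
the functional $x\otimes y$ is a linear combination of the diagonal functionals $u_{k}\otimes u_{k}$ with $u_{k}=x+i^{k}y$. Hence an algebra is $(e\otimes e)$-tracial for every vector $e$ if and only if it is $(x\otimes y)$-tracial for every pair $x,y$. This is what lets me trade the full family of rank-one functionals appearing in Theorem \ref{15} for the diagonal family in the Corollary.

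For the forward direction I would argue as follows. If $\mathcal{A}$ is maximal $(e\otimes e)$-tracial for every unit vector $e$, then in particular $\mathcal{A}$ is $(e\otimes e)$-tracial for every $e$, so by the polarization remark $\langle(ST-TS)x,y\rangle=0$ for all $x,y\in\mathcal{H}$ and all $S,T\in\mathcal{A}$; thus $\mathcal{A}$ is abelian. Now Theorem \ref{10} applies to each unit $e$ and yields that $\mathcal{A}$ is maximal abelian and that $\overline{\mathcal{A}e}=\mathcal{H}$ for every $e$. Letting $e$ range over all nonzero vectors, every nonzero closed $\mathcal{A}$-invariant subspace must be all of $\mathcal{H}$, so $\mathcal{A}$ is transitive. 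This gives abelian and transitive (in fact maximal abelian and transitive).

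The reverse direction is where the work lies. Assuming $\mathcal{A}$ abelian and transitive, transitivity immediately gives $\overline{\mathcal{A}e}=\mathcal{H}$ for every nonzero $e$, and since $\mathcal{A}^{\ast}$ is transitive as well (by the equivalence noted before the Corollary) it gives $\overline{\mathcal{A}^{\ast}e}=\mathcal{H}$; these are conditions (2) and (3). To conclude via Theorem \ref{10} that $\mathcal{A}$ is maximal $(e\otimes e)$-tracial I still need condition (1), namely that $\mathcal{A}$ is maximal abelian, equivalently $\mathcal{A}'=\mathcal{A}$. This implication---that a weak-operator closed abelian transitive algebra equals its own commutant---is exactly what the forward direction forced on us, and I expect it to be the main obstacle. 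The available leverage is that, by transitivity, the kernel and the closure of the range of any nonzero $S\in\mathcal{A}'$ are closed $\mathcal{A}$-invariant subspaces, so every nonzero element of $\mathcal{A}'$ is injective with dense range, and every nonzero vector is simultaneously cyclic and separating for $\mathcal{A}$. Given $T\in\mathcal{A}'$ and a unit vector $e$, the vector $Te$ lies in $\overline{\mathcal{A}e}=\mathcal{H}$, and $T$ is the unique extension of the densely defined map $Ae\mapsto A(Te)$; the difficulty is that an approximating net $A_{\lambda}e\to Te$ need not be norm-bounded, so one cannot directly conclude $A_{\lambda}\to T$ in the weak operator topology and hence $T\in\mathcal{A}$. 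Overcoming this---passing from $Te\in\overline{\mathcal{A}e}$ to membership certifying $T\in\mathcal{A}$, using the separating property and weak-operator closedness---is the crux. Once $\mathcal{A}=\mathcal{A}'$ is in hand, the contrapositive of the computation in the proof of Theorem \ref{10} finishes the argument: any proper $(e\otimes e)$-tracial extension would produce an element of $\mathcal{A}'\setminus\mathcal{A}$, which is now impossible.
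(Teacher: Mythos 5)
Your forward direction is, in substance, exactly the paper's argument. The paper handles abelianness by observing that $ST-TS\neq 0$ has nonzero numerical range, hence $\langle (ST-TS)e,e\rangle\neq 0$ for some unit vector $e$; that is the same fact your polarization identity establishes. It then says "the rest follows as in the proof of Theorem \ref{15}," i.e., maximal abelianness follows from maximality of traciality and transitivity follows from Theorem \ref{10}, which is precisely your route. So on this half the proposal and the paper coincide.

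The obstacle you flag in the backward direction is genuine, and you have analyzed it correctly: since any $T\in\mathcal{A}'\setminus\mathcal{A}$ generates together with $\mathcal{A}$ an abelian, hence $(e\otimes e)$-tracial, algebra properly containing $\mathcal{A}$, the conclusion "maximal $(e\otimes e)$-tracial" really does force $\mathcal{A}=\mathcal{A}'$. Thus the backward implication as literally stated is equivalent to the assertion that every unital weak-operator closed abelian transitive subalgebra of $B(\mathcal{H})$ is maximal abelian, and your observation that an approximating net $A_{\lambda}e\to Te$ need not be bounded explains why cyclicity alone will not deliver this. What you should know is that the paper does not close this gap either: its proof of Corollary \ref{20} simply defers to Theorem \ref{15}, whose backward direction has \emph{maximal} abelian plus transitive as its hypothesis, and the passage from "abelian and transitive" to "maximal abelian" (a nontrivial assertion bound up with the transitive algebra problem, and proved nowhere in the paper) is silently elided. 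So what the paper's argument actually establishes is the equivalence of "maximal $e\otimes e$-tracial for every unit vector $e$" with "maximal abelian and transitive." In short: your proposal is incomplete as a proof of the literal statement, but it faithfully reproduces everything the paper proves, and the crux you isolated is an imprecision in the paper's corollary rather than an idea you failed to find.
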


\begin{proof}
Following the proof of Theorem \ref{15} , we just need to show $\mathcal{A}$
is abelian. If $S,T\in\mathcal{A}$ and $ST-TS\neq0,$ then the numerical range
of $ST-TS$ is not $\left\{  0\right\}  $, which means that there is a unit
vector $e\in\mathcal{H}$ such that%
\[
\left(  e\otimes e\right)  \left(  ST-TS\right)  =\left\langle \left(
ST-TS\right)  e,e\right\rangle \neq0,
\]
which contradicts the assumption that $\mathcal{A}$ is $\left(  e\otimes
e\right)  $-tracial. Hence $\mathcal{A}$ is abelian and the rest follows as in
the proof of Theorem \ref{15} .
\end{proof}

We need to interpret the preceding results for Banach spaces. Suppose $X$ is a
Banach space and $Y$ is the dual space of $X.$ Then the $\sigma\left(
X,Y\right)  $-topology on $X$ is the weak topology and the $\sigma\left(
Y,X\right)  $-topology on $Y$ is the weak*-topology. Hence $L_{\sigma\left(
X,Y\right)  }\left(  X\right)  $ is the set of all linear transformations $T$
on $X$ that are weak-weak continuous, which, by the closed graph theorem,
simply means that $L_{\sigma\left(  X,Y\right)  }\left(  X\right)  $ is the
set of all linear transformations $T$ on $X$ that are bounded on $X$. However,
$L_{\sigma\left(  Y,X\right)  }\left(  Y\right)  $ is the set of all linear
transformations on $Y$ that are weak*-weak* continuous, which are precisely
the adjoints of transformations in $B\left(  X\right)  $. Thus $L_{\sigma
\left(  Y,X\right)  }\left(  Y\right)  \neq B\left(  Y\right)  ,$ except when
$X$ is a reflexive Banach space. If $T\in L_{\sigma\left(  X,Y\right)
}\left(  X\right)  $, then $T^{\#}$ is the usual Banach-space adjoint of $T$.
However, if $S\in L_{\sigma\left(  Y,X\right)  }\left(  Y\right)  $, then
$S=T^{\#}$ for some $T\in B\left(  X\right)  $, and our definition of $S^{\#}$
is $S^{\#}=T$. Nonetheless, the Banach-space adjoint of $S$ acts on normed
dual of $Y,$ which is the second dual of $X$. For example, C. Read \cite{Read}
has constructed a transitive operator $T$ on $\ell^{1}$. But no operator $A$
is transitive on the dual $\ell^{\infty}$ of $\ell^{1}$, since, for any
nonzero $x\in\ell^{\infty}$ the closed linear span $M$ of $\left\{
x,Ax,\ldots\right\}  $ is nonzero, and not $\ell^{\infty}$ ($M$ is separable)
and $A$-invariant. However, $T^{\#}$ has no nontrivial weak*-closed invariant
subspaces of $\ell^{\infty}$.

\begin{corollary}
\label{25}Suppose $X$ is a Banach space with normed dual $Y$, $\mathcal{A}%
\subset B\left(  X\right)  $ is an abelian algebra, and $x\in X,$ $\alpha\in
Y$ with $\alpha\left(  x\right)  =1$. Then a maximal abelian algebra
$\mathcal{A}\subset B\left(  X\right)  $ is a maximal $x\otimes\alpha$-tracial
algebra if and only if $\left[  \mathcal{A}x\right]  ^{-\left\Vert
.\right\Vert }=X$ and $\left[  \mathcal{A}^{\#}\alpha\right]  ^{-\text{weak*}%
}=Y$.
\end{corollary}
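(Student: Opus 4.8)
The plan is to derive this directly from Theorem \ref{10} by specializing the abstract dual-pair machinery to the concrete Banach-space pair $(X,Y)$. First I would invoke the identifications recorded in the paragraph preceding the statement: when $Y$ is the normed dual of $X$, the space $L_{\sigma(X,Y)}(X)$ of weak--weak continuous maps coincides (via the closed graph theorem) with $B(X)$, the $\sigma(X,Y)$-topology on $X$ is the weak topology, and the $\sigma(Y,X)$-topology on $Y$ is the weak*-topology. Under these identifications, Theorem \ref{10} asserts that $\mathcal{A}$ is maximal $x\otimes\alpha$-tracial precisely when (1) $\mathcal{A}$ is maximal abelian in $B(X)$, (2) $[\mathcal{A}x]^{-\text{weak}}=X$, and (3) $[\mathcal{A}^{\#}\alpha]^{-\text{weak*}}=Y$. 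I would also note, using the earlier discussion, that $\mathcal{A}^{\#}$ is exactly the family of Banach-space adjoints of the elements of $\mathcal{A}$, so that condition (3) is correctly interpreted with the weak*-topology on $Y$ (rather than some topology on the second dual).

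Since the corollary already assumes that $\mathcal{A}$ is maximal abelian, condition (1) is built into the hypothesis, and condition (3) is literally the weak*-closure condition appearing in the statement. Thus the only gap to close is to replace the weak-closure condition (2) by the norm-closure condition $[\mathcal{A}x]^{-\left\Vert .\right\Vert}=X$.

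The key observation is that $\mathcal{A}x$ is a linear subspace of $X$, and for a convex subset of a normed space the weak closure and the norm closure coincide (Mazur's theorem, a consequence of the Hahn--Banach separation theorem). Hence $[\mathcal{A}x]^{-\text{weak}}=[\mathcal{A}x]^{-\left\Vert .\right\Vert}$, so in particular one equals $X$ if and only if the other does. Substituting this equivalence into condition (2) yields exactly the stated characterization.

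I expect the argument to be essentially bookkeeping, with no genuinely hard step, since the substantive content is carried by Theorem \ref{10}. The one point requiring care is the translation of the abstract topologies into their Banach-space counterparts, and in particular verifying that the replacement of weak by norm closure is legitimate precisely because $\mathcal{A}x$ is a \emph{linear} (hence convex) set; for a non-convex set the two closures could differ, so the convexity hypothesis is where the reasoning genuinely rests.
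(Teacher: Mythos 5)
Your proposal is correct and is essentially the paper's own (implicit) argument: the corollary is stated as a direct specialization of Theorem \ref{10} using the identifications in the preceding paragraph ($L_{\sigma(X,Y)}(X)=B(X)$, weak and weak* topologies, $\#$ as the Banach-space adjoint), with the passage from weak closure to norm closure of the linear subspace $\mathcal{A}x$ justified exactly as you say, by Mazur's theorem. You have merely made explicit the convexity step that the paper leaves unstated.
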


\begin{example}
Suppose $\left(  \Omega,\Sigma,\mu\right)  $ is a probability space and
$\alpha$ is a normalized gauge norm on $L^{\infty}\left(  \mu\right)  ,$ i.e.,
$\alpha$ is a norm such that $\alpha\left(  1\right)  =1,$ and for every $f\in
L^{\infty}\left(  \mu\right)  ,$ $\alpha\left(  f\right)  =\alpha\left(
\left\vert f\right\vert \right)  $. We can extend the domain of $\alpha$ to
every measurable function $f$ by%
\[
\alpha\left(  f\right)  =\sup\left\{  \alpha\left(  h\right)  :h\in L^{\infty
}\left(  \mu\right)  ,\left\vert h\right\vert \leq\left\vert f\right\vert
\text{ a.e. }\left(  \mu\right)  \right\}  .
\]
Then $\mathcal{L}^{\alpha}\left(  \mu\right)  =\left\{  f:\alpha\left(
f\right)  <\infty\right\}  $ is a Banach space and $L^{\alpha}\left(
\mu\right)  $ is defined as the $\alpha$-closure of $L^{\infty}\left(
\mu\right)  $. We say that the gauge norm $\alpha$ is \emph{continuous} if
\[
\lim_{\mu\left(  E\right)  \rightarrow0}\alpha\left(  \chi_{E}\right)  =0.
\]
It is well-known that if $f\in L^{\alpha}\left(  \mu\right)  $ and $h\in
L^{\infty}\left(  \mu\right)  ,$ then%
\[
\alpha\left(  hf\right)  \leq\alpha\left(  f\right)  \left\Vert h\right\Vert
_{\infty}.
\]
Thus $L^{\infty}\left(  \mu\right)  $ acts, via multiplication, as operators
on $L^{\alpha}\left(  \mu\right)  $. It is shown that $L^{\infty}\left(
\mu\right)  $ is actually a maximal abelian algebra of operators on
$L^{\alpha}\left(  \mu\right)  $. The norm $\alpha$ has a \emph{dual norm}
$\alpha^{\prime}$ on $L^{\infty}\left(  \mu\right)  $ defined by%
\[
\alpha^{\prime}\left(  f\right)  =\sup\left\{  \left\Vert fh\right\Vert
_{1}:h\in L^{\infty}\left(  \mu\right)  ,\alpha\left(  h\right)
\leq1\right\}  \text{ .}%
\]
If $\left\Vert .\right\Vert _{1}\leq\alpha$, then $\alpha^{\prime}$ is a
normalized gauge norm, and, for $f\in L^{\alpha}\left(  \mu\right)  $ and
$h\in\mathcal{L}^{\alpha^{\prime}}\left(  \mu\right)  ,$
\[
\left\Vert fh\right\Vert _{1}\leq\alpha\left(  f\right)  \alpha^{\prime
}\left(  h\right)  \text{.}%
\]
If, in addition, $\alpha$ is continuous, then the normed dual of $L^{\alpha
}\left(  \mu\right)  $ is $\mathcal{L}^{\alpha^{\prime}}\left(  \mu\right)  $,
given for $h\in\mathcal{L}^{\alpha^{\prime}}\left(  \mu\right)  ,$ the
functional $\psi_{h}$ on $L^{\alpha}\left(  \mu\right)  $ defined by
\[
\psi_{h}\left(  f\right)  =\int_{\Omega}fhd\mu\text{ .}%
\]
Suppose $f\in L^{\alpha}\left(  \mu\right)  $ and $h\in\mathcal{L}%
^{\alpha^{\prime}}\left(  \mu\right)  $, then $L^{\infty}\left(  \mu\right)  $
is maximal $f\otimes h$-tracial if and only if both $L^{\infty}\left(
\mu\right)  f$ is dense in $L^{\alpha}\left(  \mu\right)  $ and $L^{\infty
}\left(  \mu\right)  h$ is weak*-dense in $\mathcal{L}^{\alpha^{\prime}%
}\left(  \mu\right)  $. This is equivalent to $f\left(  \omega\right)  \neq0$
a.e. $\left(  \mu\right)  $ and $h\left(  \omega\right)  \neq0$ a.e. $\left(
\mu\right)  $. To see this, suppose $f\neq0$ a.e. $\left(  \mu\right)  $.
Suppose $\psi\in L^{\alpha}\left(  \mu\right)  ^{\#}$ and $\psi|_{L^{\infty
}\left(  \mu\right)  f}=0.$ Then there is an $h\in\mathcal{L}^{\alpha^{\prime
}}\left(  \mu\right)  $ such that, $\psi=\psi_{h}.$ Hence, for every $u\in
L^{\infty}\left(  \mu\right)  $,
\[
\int_{\Omega}ufhd\mu=0.
\]
Since $fh\in L^{1}\left(  \mu\right)  $, this means $fh=0$ a.e. $\left(
\mu\right)  ,$ which implies $h=0$ a.e. $\left(  \mu\right)  $. Thus $\psi=0.$
A similar proof shows that if $h\in\mathcal{L}^{\alpha^{\prime}}\left(
\mu\right)  $, then $L^{\infty}\left(  \mu\right)  h\ $is weak*-dense in
$\mathcal{L}^{\alpha}\left(  \mu\right)  $.
\end{example}

\begin{example}
Let $H^{2}$ be the classical Hardy space on the unit disk. It is known that
$H^{\infty}$ (acting as multiplications) is a maximal abelian algebra in
$B\left(  H^{2}\right)  $. The cyclic vectors for $H^{\infty}$ are the outer
functions (see \cite{Helson}). There are many cyclic vectors in $H^{2}$ for
$\left(  H^{\infty}\right)  ^{\#}$. We see that $H^{\infty}$ is $f\otimes
h$-maximal tracial if and only if $f$ is outer and $h$ is a cyclic vector for
$\left(  H^{\infty}\right)  ^{\#}$. Since $1$ is not cyclic for $\left(
H^{\infty}\right)  ^{\#}$, we see that $H^{\infty}$ is \textbf{not}
$1\otimes1$-maximal tracial.
\end{example}

\begin{example}
In the preceding example, $H^{\infty}$ is the unital weakly closed algebra
generated by the unilateral shift operator on $\ell^{2}$. Suppose
$X\in\left\{  c_{0}\right\}  \cup\left\{  \ell^{p}:1\leq p<\infty\right\}  $,
and let
\[
e_{0}=\left(  1,0,\ldots\right)  ,e_{1}=\left(  0,1,0,\ldots\right)
,\ldots\text{ .}%
\]
Suppose $\left\{  \beta_{n}\right\}  _{n\geq0}$ is a bounded sequence of
positive numbers, and define $Te_{n}=\beta_{n}e_{n+1},$ i.e.%
\[
T\left(  a_{0},a_{1},a_{2},\ldots\right)  =\left(  0,\beta_{0}a_{0},\beta
_{1}a_{1},\ldots\right)  .
\]
Then $T$ is a bounded linear operator and $\mathcal{A}=\left\{  T\right\}
^{\prime}=\left\{  p\left(  T\right)  :p\in\mathbb{C}\left[  z\right]
\right\}  ^{-WOT}$ is a maximal abelian algebra in $B\left(  X\right)  $.
Clearly, $e_{0}$ is a cyclic vector for $T$. There are also many weak*-cyclic
vectors for $T^{\#}$ of the form $h=\left(  1,d_{1},d_{2},\ldots\right)  $.
Thus $\mathcal{A}$ is maximal $e_{0}\otimes h$-tracial for many choices of $h$.
\end{example}

\begin{example}
Suppose $I$ is an uncountable set. Then $\ell^{\infty}\left(  I\right)  $ is a
maximal abelian algebra of operators on $\ell^{2}\left(  I\right)  $, but,
since any vector in $\ell^{2}\left(  I\right)  $ has countable support,
$\ell^{\infty}\left(  I\right)  $ has no cyclic vector. Hence $\ell^{\infty
}\left(  I\right)  $ is not maximal tracial for $x\otimes y$ for any
$x,y\in\ell^{2}\left(  I\right)  $, or for any weak*-continuous linear
$\varphi$ functional on $B\left(  \ell^{2}\left(  I\right)  \right)  $ since
such a $\varphi$ involves only countably many vectors.
\end{example}

\begin{definition}
Suppose $\left(  X,Y\right)  $ is a dual pair over a Hausdorff field
$\mathbb{F}$, and $\mathcal{A}$ is a WOT-closed unital subalgebra of
$L_{\sigma\left(  X,Y\right)  }\left(  X\right)  $. We say that $\mathcal{A}$
has a \emph{complemented invariant subspace lattice} if and only if, for every
$\mathcal{A}$-invariant $\sigma\left(  X,Y\right)  $-closed linear subspace
$M$ of $X$, there is an idempotent $P\in L_{\sigma\left(  X,Y\right)  }\left(
X\right)  $ such that $P\left(  X\right)  =M$ and $P\in\mathcal{A}^{\prime}$,
where $\mathcal{A}^{\prime}$ denotes the commutant of $\mathcal{A}.$
\end{definition}

\begin{theorem}
\label{30}Suppose $\left(  X,Y\right)  $ is a dual pair over a Hausdorff field
$\mathbb{F}$, and $\mathcal{A}$ is a WOT-closed unital subalgebra of
$L_{\sigma\left(  X,Y\right)  }\left(  X\right)  $ with a complemented
invariant subspace lattice. If $\mathcal{A}$ is maximal tracial for $e\otimes
f,$ with $e\in X,$ $f\in Y$ and $f\left(  e\right)  =1$, then $\left[
\mathcal{A}e\right]  ^{-\sigma\left(  X,Y\right)  }=X$ and $\left[
\mathcal{A}^{\#}f\right]  ^{-\sigma\left(  Y,X\right)  }=Y$.
\end{theorem}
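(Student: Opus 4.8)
The plan is to argue by contradiction: assuming a density condition fails, I will produce a $\varphi$-tracial algebra (with $\varphi=e\otimes f$) strictly larger than $\mathcal{A}$, contradicting maximality. I will prove $[\mathcal{A}e]^{-\sigma(X,Y)}=X$ directly, and then obtain $[\mathcal{A}^{\#}f]^{-\sigma(Y,X)}=Y$ by applying the same argument to the pair $(\mathcal{A}^{\#},f\otimes e)$, once I check that $\mathcal{A}^{\#}$ also has a complemented invariant subspace lattice. Throughout write $\mathcal{L}=L_{\sigma(X,Y)}(X)$.

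First I would set $M=[\mathcal{A}e]^{-\sigma(X,Y)}$ and record that $M$ is $\sigma(X,Y)$-closed, $\mathcal{A}$-invariant (each $A\in\mathcal{A}$ is $\sigma$-continuous and $\mathcal{A}\mathcal{A}e\subseteq\mathcal{A}e$), and nonzero (since $1\in\mathcal{A}$ gives $e\in M$, and $f(e)=1$ forces $e\neq0$). Suppose $M\neq X$. Invoking the complemented invariant subspace lattice hypothesis, I obtain an idempotent $P\in\mathcal{A}'$ with $P(X)=M$; put $Q=1-P$, so $Q$ is a nonzero idempotent with $Qe=e-Pe=0$ (as $e\in M$) and $QP=PQ=0$.

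The engine of the proof is the corner $P\mathcal{L}Q=\{PSQ:S\in\mathcal{L}\}$, that is, the maps sending $QX$ into $M$ and annihilating $M$. I claim $\mathcal{A}+P\mathcal{L}Q$ is a $\varphi$-tracial algebra strictly containing $\mathcal{A}$. It is an algebra because $P\in\mathcal{A}'$: pushing $P,Q$ past elements of $\mathcal{A}$ gives $\mathcal{A}\cdot P\mathcal{L}Q\subseteq P\mathcal{L}Q$ and $P\mathcal{L}Q\cdot\mathcal{A}\subseteq P\mathcal{L}Q$, while $(P\mathcal{L}Q)^{2}=0$ since $QP=0$, so $P\mathcal{L}Q$ is a square-zero two-sided ideal. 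For traciality, recall $\varphi(T)=f(Te)$; every $S\in P\mathcal{L}Q$ satisfies $Se=PSQe=0$ (because $Qe=0$), and for $A\in\mathcal{A}$ one has $Ae\in M$, whence $S(Ae)=PSQ(Ae)=0$. Thus, writing $U=A+S$ and $V=A'+S'$ with $A,A'\in\mathcal{A}$ and $S,S'\in P\mathcal{L}Q$, all mixed terms and the term $SS'\in(P\mathcal{L}Q)^{2}$ are killed by $\varphi$, leaving $\varphi(UV)=\varphi(AA')=\varphi(A'A)=\varphi(VU)$ by traciality of $\mathcal{A}$. Strict containment holds because $P\in\mathcal{A}'$ forces every element of $\mathcal{A}$ to be block-diagonal for $X=PX\oplus QX$, so $\mathcal{A}\cap P\mathcal{L}Q=\{0\}$, whereas $P\mathcal{L}Q\neq\{0\}$: choosing $g\in M^{\perp}\setminus\{0\}$ (nonempty since $M\neq X$ is $\sigma$-closed), the rank-one operator $e\otimes g$ lies in $P\mathcal{L}Q$ and is nonzero. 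This contradicts maximality, so $M=X$.

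For the dual statement I would apply Lemma \ref{5}: $\mathcal{A}^{\#}$ is maximal $(e\otimes f)^{\#}=f\otimes e$-tracial in $L_{\sigma(Y,X)}(Y)$. It remains to check that $\mathcal{A}^{\#}$ has a complemented invariant subspace lattice. Given an $\mathcal{A}^{\#}$-invariant $\sigma(Y,X)$-closed $N\subseteq Y$, its annihilator $M=N_{\perp}$ is $\mathcal{A}$-invariant and $\sigma(X,Y)$-closed, so the hypothesis furnishes an idempotent $P\in\mathcal{A}'$ with $P(X)=M$; then $(1-P)^{\#}=1-P^{\#}\in(\mathcal{A}^{\#})'$ is an idempotent whose range is $\ker P^{\#}=(PX)^{\perp}=M^{\perp}=N$. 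Applying the first part verbatim to $(\mathcal{A}^{\#},f\otimes e)$ then yields $[\mathcal{A}^{\#}f]^{-\sigma(Y,X)}=Y$. The step I expect to demand the most care is the strict-containment claim: it is precisely here that the full strength of the hypothesis is used, since I need $M$ to be the range of an idempotent in the \emph{commutant} $\mathcal{A}'$—so that $\mathcal{A}$ is block-diagonal and disjoint from the off-diagonal corner—rather than merely an invariant subspace.
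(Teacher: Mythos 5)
Your proof is correct, and its first half is essentially the paper's own argument: the paper takes the idempotent $P\in\mathcal{A}'$ with range $M=\left[\mathcal{A}e\right]^{-\sigma(X,Y)}$, decomposes $X=M\oplus\ker P$, and adjoins the ideal $\mathcal{S}$ of strictly upper-triangular operators --- exactly your corner $P\mathcal{L}Q$ --- to get the tracial algebra $\mathcal{A}+\mathcal{S}$, whence maximality forces $M=X$. You are in fact more explicit than the paper at the point you flagged as delicate: the paper concludes ``$P=1$ and $M=X$'' without spelling out that $\mathcal{A}\cap\mathcal{S}=\{0\}$ because $\mathcal{A}\subseteq\{P\}'$, nor that $\mathcal{S}\neq\{0\}$ via a rank-one operator $e\otimes g$ with $0\neq g\in M^{\perp}$; your write-up supplies exactly these steps. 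Where you genuinely diverge is the second half. The paper does not dualize: it stays on the $X$ side, setting $N=\left[\mathcal{A}^{\#}f\right]^{-\sigma(Y,X)}$, noting that $N_{\perp}$ is $\mathcal{A}$-invariant with $N_{\perp}\subseteq\ker f$, taking an idempotent $P\in\mathcal{A}'$ with $P(X)=N_{\perp}$, and adjoining the complementary corner $\mathcal{T}$ of off-diagonal operators whose ranges lie in $N_{\perp}\subseteq\ker f$ (hence killed by $e\otimes f$); maximality forces $N_{\perp}=\{0\}$, so $N=\left(N_{\perp}\right)^{\perp}=Y$. You instead invoke Lemma \ref{5} to transfer maximal traciality to $\mathcal{A}^{\#}$, verify that the complemented-lattice hypothesis passes to $\mathcal{A}^{\#}$ (via $1-P^{\#}$, whose range is $\ker P^{\#}=(N_{\perp})^{\perp}=N$), and reapply the first half in the dual pair $(Y,X)$. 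Both routes are sound. The paper's buys economy: one never leaves $L_{\sigma(X,Y)}(X)$ and never has to check hypotheses for $\mathcal{A}^{\#}$ --- note your ``verbatim'' application also implicitly needs $\mathcal{A}^{\#}$ to be WOT-closed, which is true but unremarked, since $\#$ is a WOT-homeomorphism of $L_{\sigma(X,Y)}(X)$ onto $L_{\sigma(Y,X)}(Y)$. Yours buys uniformity: the traciality computation is done once, and the second density statement becomes literally the first one viewed through the duality, which makes transparent why the two conclusions are mirror images of each other.
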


\begin{proof}
Let $M=\left[  \mathcal{A}e\right]  ^{-\sigma\left(  X,Y\right)  }$. Then $M$
is $\mathcal{A}$-invariant. Thus there is an idempotent $P\in L_{\sigma\left(
X,Y\right)  }\left(  X\right)  $ such that $P\left(  X\right)  =M$ and
$P\in\mathcal{A}^{\prime}$. With respect to the decomposition $X=M\oplus\ker
P$, we can write every $T\in B\left(  X\right)  $ as an operator matrix
\[
T=\left(
\begin{array}
[c]{cc}%
B & C\\
D & E
\end{array}
\right)  .
\]
If $A\in\mathcal{A}$ we have%
\[
A=\left(
\begin{array}
[c]{cc}%
A_{11} & 0\\
0 & A_{22}%
\end{array}
\right)  .
\]
Let $\mathcal{S}$ be the set of all the operators of the form $\left(
\begin{array}
[c]{cc}%
0 & C\\
0 & 0
\end{array}
\right)  $ and let $\mathcal{D}$ $=\mathcal{A+S}$. Since $e\in M,$ then it
follows that $\mathcal{S}$ is a two-sided ideal in $\mathcal{D}$ and that
$\mathcal{S}\subset\ker\left(  e\otimes f\right)  $. Thus for every
$A,B\in\mathcal{A}$ and every $S_{1},S_{2}\in\mathcal{S}$ we have%
\[
\left(  A+S_{1}\right)  \left(  B+S_{2}\right)  -\left(  B+S_{2}\right)
\left(  A+S_{1}\right)  -\left(  AB-BA\right)  \in\mathcal{S}\text{.}%
\]
Hence $\mathcal{D}$ is $e\otimes f$-tracial. Since $\mathcal{A}$ is maximal
$e\otimes f$-tracial, then $P=1$ and $M=X$.

Similarly, suppose $N=\left[  \mathcal{A}^{\#}f\right]  ^{-\sigma\left(
Y,X\right)  }.$ Then $N_{\perp}$ is $\mathcal{A}$-invariant. Thus there is an
idempotent $P\in L_{\sigma\left(  X,Y\right)  }\left(  X\right)
\cap\mathcal{A}^{\prime}$ such that $P\left(  X\right)  =N_{\perp}\subset\ker
f$. Consequently $\left(  1-P^{\#}\right)  \left(  Y\right)  =N$. Writing the
matrix representations of the elements of $L_{\sigma\left(  X,Y\right)
}\left(  X\right)  $ and $\mathcal{A}$ as above, we can let $\mathcal{T}$ be
the set of all operators of the form $\left(
\begin{array}
[c]{cc}%
0 & 0\\
D & 0
\end{array}
\right)  .$ Since the range of any operator in $\mathcal{T}$ is contained in
$N_{\perp}\subset\ker f$, we see that $\mathcal{T}\subset\ker\left(  e\otimes
f\right)  $. Therefore $\mathcal{E=A}+T$ is $e\otimes f$-tracial, and, since
$\mathcal{A}$ is maximal $\left(  e\otimes f\right)  $-tracial, we see that
$N_{\perp}=\left\{  0\right\}  $, and $N=\left(  N_{\perp}\right)  ^{\perp}=Y$.
\end{proof}

\section{Multiplier Pairs}

Multiplier pairs were studied in \cite{HN}. A \emph{multiplier pair} is a pair
$\left(  X,Y\right)  $ where $X$ is a Banach space, $Y$ is a Hausdorff
topological vector space, $X\subset Y$, and there is a multiplication on $X$
with values in $Y$, i.e., a bilinear map%
\[
\cdot:X\times X\rightarrow Y
\]
such that

\begin{enumerate}
\item $\cdot$ is continuous in each variable

\item There is an \emph{identity element} $e\in X$ such that, for every $a\in
X,$%
\[
a\cdot e=e\cdot a=a
\]

\item The sets $\mathcal{L}_{0}=\left\{  a\in X:a\cdot X\subset X\right\}  $
and $\mathcal{R}_{0}=\left\{  b\in X:X\cdot b\subset X\right\}  $ are dense in
$X$,

\item There are dense subsets $E\subset\mathcal{L}_{0}$, $F\subset X$,
$G\subset\mathcal{R}_{0}$, such that, for all $a\in E,b\in F,c\in G$, we have%
\[
\left(  a\cdot b\right)  \cdot c=a\cdot\left(  b\cdot c\right)  .
\]

\end{enumerate}

If $a\in\mathcal{L}_{0}$ and $b\in\mathcal{R}_{0}$, the operators $L_{a}$ and
$R_{b}$ defined on $X$ by%
\[
L_{a}x=a\cdot x\text{ and }R_{b}x=x\cdot b
\]
are bounded on $X$ (see \cite[Theorem 1]{HN}). Moreover, it was shown in
\cite[Theorem 1]{HN} that if $\mathcal{L}=\left\{  L_{a}:a\in\mathcal{L}%
_{0}\right\}  $ and $\mathcal{R=}\left\{  R_{b}:b\in\mathcal{R}_{0}\right\}
$, then $\mathcal{L},\mathcal{R}\subset B\left(  X\right)  $ and%
\[
\mathcal{L}^{\prime}=\mathcal{R}\text{ and }\mathcal{R}^{\prime}%
=\mathcal{L}\text{ .}%
\]
In particular, if the multiplication is commutative, then $\mathcal{L}%
=\mathcal{R}$ is maximal abelian in $B\left(  X\right)  $. In this case the
next theorem reduces to Theorem \ref{10}.

We now prove a result about multiplier pairs that gives a lot of examples of
maximal tracial algebras. In many of the examples in \cite{HN}, condition
$\left(  1\right)  $ in the following theorem holds.

\begin{theorem}
\label{35}Suppose $\left(  X,Y\right)  $ is a multiplier pair with identity
element $e$, and suppose $\alpha\in X^{\#}$ and $\alpha\left(  e\right)  =1$.
Also suppose

\begin{enumerate}
\item $\left\{  R_{B}:B\in\mathcal{L}_{0}\cap\mathcal{R}_{0}\right\}  $ is
WOT-dense in $\mathcal{R}$

\item $\left[  \mathcal{L}^{\#}\alpha\right]  ^{-\text{\textrm{weak*}}}%
=X^{\#}$, and

\item $\mathcal{L}$ is $\left(  e\otimes\alpha\right)  $-tracial
\end{enumerate}

Then $\mathcal{L}$ is maximal $\left(  e\otimes\alpha\right)  $-tracial.
\end{theorem}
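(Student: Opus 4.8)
The plan is to show directly that any $T\in B\left(X\right)$ for which the algebra generated by $\mathcal{L}\cup\left\{T\right\}$ is $\left(e\otimes\alpha\right)$-tracial must already lie in $\mathcal{L}$; since $\mathcal{L}=\mathcal{R}^{\prime}$, this amounts to proving that $T$ commutes with every $R_{b}$. First I would record two elementary ``swap'' identities valid for $B\in\mathcal{L}_{0}\cap\mathcal{R}_{0}$, whose common source is that $L_{B}e=R_{B}e=B$. The right swap $\left(e\otimes\alpha\right)\left(WR_{B}\right)=\left(e\otimes\alpha\right)\left(WL_{B}\right)$ holds for every $W\in B\left(X\right)$ simply because $R_{B}e=L_{B}e$, while the left swap $\left(e\otimes\alpha\right)\left(R_{B}W\right)=\left(e\otimes\alpha\right)\left(L_{B}W\right)$ requires the identity $\alpha\left(v\cdot B\right)=\alpha\left(B\cdot v\right)$ for all $v\in X$. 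I obtain the latter by taking hypothesis $\left(3\right)$, which gives $\alpha\left(a\cdot B\right)=\alpha\left(B\cdot a\right)$ for $a\in\mathcal{L}_{0}$, and extending from the dense set $\mathcal{L}_{0}$ to all of $X$ using the continuity of $\alpha$ together with the boundedness of $L_{B}$ and $R_{B}$.

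Next, fix $T$ as above and $B\in\mathcal{L}_{0}\cap\mathcal{R}_{0}$, and consider, for $a,c\in\mathcal{L}_{0}$, the scalar $\left(e\otimes\alpha\right)\bigl(L_{a}\left(TR_{B}-R_{B}T\right)L_{c}\bigr)$. I would expand this into two terms and, in each, use $R_{B}L_{c}=L_{c}R_{B}$ and $L_{a}R_{B}=R_{B}L_{a}$ (valid since $\mathcal{R}\subset\mathcal{L}^{\prime}$) to move $R_{B}$ to an outer position, and then apply the appropriate swap identity to replace $R_{B}$ by $L_{B}$. After these manipulations both terms become values of $e\otimes\alpha$ on products lying entirely inside the algebra generated by $\mathcal{L}\cup\left\{T\right\}$, namely $\left(e\otimes\alpha\right)\left(L_{a}TL_{c}L_{B}\right)$ and $\left(e\otimes\alpha\right)\left(L_{B}L_{a}TL_{c}\right)$. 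Since that algebra is $\left(e\otimes\alpha\right)$-tracial, its cyclic property shows that these two quantities coincide, so the whole expression vanishes.

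Having shown $\left(e\otimes\alpha\right)\bigl(L_{a}\left(TR_{B}-R_{B}T\right)L_{c}\bigr)=0$ for all $a,c\in\mathcal{L}_{0}$, I would fix $c$ and read this, via $L_{c}e=c$, as $\left(L_{a}^{\#}\alpha\right)\bigl(\left(TR_{B}-R_{B}T\right)c\bigr)=0$. Hypothesis $\left(2\right)$ makes $\mathcal{L}^{\#}\alpha$ weak*-dense in $X^{\#}$, hence point-separating, so the vector $\left(TR_{B}-R_{B}T\right)c$ is $0$; as $c$ ranges over the dense set $\mathcal{L}_{0}$ and $TR_{B}-R_{B}T$ is bounded, I conclude $TR_{B}=R_{B}T$ for every $B\in\mathcal{L}_{0}\cap\mathcal{R}_{0}$. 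Finally, hypothesis $\left(1\right)$ together with the separate WOT-continuity of operator multiplication upgrades this to $TR_{b}=R_{b}T$ for all $b\in\mathcal{R}_{0}$, that is, $T\in\mathcal{R}^{\prime}=\mathcal{L}$. Thus no proper enlargement of $\mathcal{L}$ is $\left(e\otimes\alpha\right)$-tracial, which is the asserted maximality.

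I expect the main obstacle to be the non-commutativity of $\mathcal{L}$: unlike in Theorem \ref{10}, the multiplication operator $R_{B}$ does not belong to the algebra generated by $\mathcal{L}\cup\left\{T\right\}$, so the cyclic/tracial property cannot be invoked for it directly. The device that overcomes this is exactly the pair of swap identities, which trade each $R_{B}$ for the corresponding $L_{B}\in\mathcal{L}$; among these, verifying the left swap, i.e.\ extending the trace identity $\alpha\left(v\cdot B\right)=\alpha\left(B\cdot v\right)$ from $\mathcal{L}_{0}$ to all of $X$, is the step that most needs care and is where hypothesis $\left(3\right)$ and the continuity encoded in the multiplier-pair axioms are essential.
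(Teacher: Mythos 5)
Your proof is correct and takes essentially the same route as the paper's: your ``left swap'' identity is exactly the paper's Claim that $L_{B}^{\#}\alpha=R_{B}^{\#}\alpha$ (proved the same way, from hypothesis (3) on the dense set $\mathcal{L}_{0}$ plus continuity of $L_{B},R_{B},\alpha$), and your computation $\left(e\otimes\alpha\right)\left(L_{a}\left(TR_{B}-R_{B}T\right)L_{c}\right)=0$ is the paper's chain of equalities rearranged into commutator form, followed by the same density arguments via hypotheses (2) and (1) and the identity $\mathcal{R}^{\prime}=\mathcal{L}$. The only cosmetic difference is that you spell out the separate WOT-continuity of multiplication, which the paper leaves implicit in its final sentence.
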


\begin{proof}
Suppose $B\in\mathcal{L}_{0}\cap\mathcal{R}_{0}$.

\textbf{Claim:} $L_{B}^{\#}\alpha=R_{B}^{\#}\alpha.$

Proof of Claim: Suppose $A\in\mathcal{L}_{0}$. Then%
\begin{align*}
\left(  L_{B}^{\#}\alpha\right)  \left(  A\right)   &  =\alpha\left(
BA\right)  =\left(  e\otimes\alpha\right)  \left(  L_{B}L_{A}\right) \\
&  =\left(  e\otimes\alpha\right)  \left(  L_{A}L_{B}\right)  =\alpha\left(
AB\right) \\
&  =\left(  R_{B}^{\#}\alpha\right)  \left(  A\right)  .
\end{align*}
Since $\mathcal{L}_{0}$ is dense in $X$, the claim is proved.

Suppose $\mathcal{D}\subset B\left(  X\right)  $ is a $\left(  e\otimes
\alpha\right)  $-tracial algebra containing $\mathcal{L}$, and suppose
$T\in\mathcal{D}$. Choose $B\in\mathcal{L}_{0}\cap\mathcal{R}_{0}$ and
$C,D\in\mathcal{L}_{0}$. We then have%
\begin{align*}
\left\langle \left(  TR_{B}\right)  \left(  C\right)  ,L_{D}^{\#}%
\alpha\right\rangle  &  =\left\langle T\left(  CB\right)  ,L_{D}^{\#}%
\alpha\right\rangle =\left\langle TL_{C}L_{B}e,L_{D}^{\#}\alpha\right\rangle
\\
&  =\left\langle L_{D}TL_{C}L_{B}e,\alpha\right\rangle =\left(  e\otimes
\alpha\right)  \left(  L_{D}TL_{C}L_{B}\right) \\
&  =\left(  e\otimes\alpha\right)  \left(  L_{B}L_{D}TL_{C}\right)
=\left\langle L_{B}L_{D}TL_{C}e,\alpha\right\rangle \\
&  =\left\langle L_{D}TL_{C}e,L_{B}^{\#}\alpha\right\rangle =\left\langle
L_{D}TL_{C}e,R_{B}^{\#}\alpha\right\rangle \\
&  =\left\langle R_{B}L_{D}TL_{C}e,\alpha\right\rangle =\left\langle
L_{D}R_{B}TL_{C}e,\alpha\right\rangle \\
&  =\left\langle \left(  R_{B}T\right)  \left(  C\right)  ,L_{D}^{\#}%
\alpha\right\rangle .
\end{align*}
Since $\mathcal{L}^{\#}\alpha$ is weak* dense in $X^{\#}$ and $\mathcal{L}%
_{0}$ is dense in $X$, we see that
\[
TR_{B}=R_{B}T
\]
for every $B\in\mathcal{L}_{0}\cap\mathcal{R}_{0}$. It follows from $\left(
1\right)  $ that $T\in\mathcal{R}^{\prime}=\mathcal{L}$. Thus $\mathcal{L}$ is
maximal $\left(  e\otimes\alpha\right)  $-tracial.
\end{proof}

\section{von Neumann Algebras}

Suppose $\mathcal{M}$ is a von Neumann algebra acting on a Hilbert space $H$
with a faithful normal tracial state $\tau$. There is a notion of
\emph{convergence in measure }on $\mathcal{M}$ (see \cite{Helson}), defined by
saying a net $\left\{  T_{\lambda}\right\}  $ in $\mathcal{M}$ converges to
$0$ in measure, denoted by $T_{\lambda}\rightarrow0$ $\left(  \tau\right)  $,
if and only if there is a net $\left\{  P_{\lambda}\right\}  $ of projections
in $\mathcal{M}$ such that $\tau\left(  P_{\lambda}\right)  \rightarrow0$ and
$\left\Vert T_{\lambda}\left(  1-P_{\lambda}\right)  \right\Vert
\rightarrow0.$ The completion of $\mathcal{M}$, denoted by $\mathcal{\hat{M}}%
$, with respect to this topology, is a $\ast$-algebra, in which all of the
algebraic operations on $\mathcal{M}$ have continuous extensions. Moreover,
every $A\in\mathcal{\hat{M}}$ has a polar decomposition $A=U\left\vert
A\right\vert $ where $U$ is a unitary in $\mathcal{M}$ and $\left\vert
A\right\vert =\left(  A^{\ast}A\right)  ^{1/2}$. A norm $\beta$ on
$\mathcal{M}$ is called a \emph{normalized unitarily invariant norm} on
$\mathcal{M}$ if, for every $T\in\mathcal{M}$ and all unitary operators
$U,V\in\mathcal{M}$,%
\[
\beta\left(  UTV\right)  =\beta\left(  T\right)  \text{.}%
\]
Examples of such norms are the $p$-norms, $1\leq p<\infty$, defined on
$\mathcal{M}$ by%
\[
\left\Vert T\right\Vert _{p}=\tau\left(  \left\vert T\right\vert ^{p}\right)
^{1/p},
\]
where $\left\vert T\right\vert =\left(  T^{\ast}T\right)  ^{1/2}$. We define
$L^{\beta}\left(  \mathcal{M},\tau\right)  $ to be the completion of
$\mathcal{M}$ with respect to $\beta.$ When $\beta\geq\left\Vert .\right\Vert
_{1}$, we get $L^{\beta}\left(  \mathcal{M},\tau\right)  \subset L^{1}\left(
\mathcal{M},\tau\right)  \subset\mathcal{\hat{M}}$. It was shown in \cite{HN}
that $\left(  L^{\beta}\left(  \mathcal{M},\tau\right)  ,\mathcal{\hat{M}%
}\right)  $ is a multiplier pair and $\mathcal{L}_{0}=\mathcal{R}%
_{0}=\mathcal{M}$ and $e=1$. Suppose $T\in\mathcal{M}$. We define $\varphi
_{T}\in L^{\beta}\left(  \mathcal{M},\tau\right)  ^{\#}$, by%
\[
\varphi_{T}\left(  A\right)  =\tau\left(  TA\right)  \text{.}%
\]
If we let $\alpha=\varphi_{1}$, we see that $1\otimes\alpha=\tau$ is tracial
on $\mathcal{M}$. Also, if $A\in L^{p}\left(  \mathcal{M},\tau\right)  \subset
L^{1}\left(  \mathcal{M},\tau\right)  $ with polar decomposition $U\left\vert
A\right\vert $, and if
\[
\left(  L_{D}^{\#}\alpha\right)  \left(  A\right)  =0
\]
for every $D\in\mathcal{M}$, then%
\[
0=L_{U^{\ast}}^{\#}\left(  A\right)  =\tau\left(  U^{\ast}A\right)
=\tau\left(  \left\vert A\right\vert \right)  .
\]
This implies $A=0.$ Thus $\left\{  L_{D}^{\#}\alpha:D\in\mathcal{L}%
_{0}\right\}  $ is weak* dense in $L^{\beta}\left(  \mathcal{M},\tau\right)
$. If we apply Theorem \ref{35}, we obtain the following result.

\begin{theorem}
\label{40}Let $\mathcal{M}$ be a von Neumann algebra with a faithful normal
tracial state. If $\beta\geq\left\Vert .\right\Vert _{1}$ is a normalized
unitarily invariant norm, then $\mathcal{M=}\left\{  L_{A}:A\in\mathcal{M}%
\right\}  $ is a maximal $1\otimes\varphi_{1}$-tracial algebra in $B\left(
L^{\beta}\left(  \mathcal{M},\tau\right)  \right)  .$
\end{theorem}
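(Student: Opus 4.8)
The plan is to recognize Theorem \ref{40} as a direct application of Theorem \ref{35} to the multiplier pair $\left(  L^{\beta}\left(  \mathcal{M},\tau\right)  ,\mathcal{\hat{M}}\right)  $, for which it has already been recorded that $\mathcal{L}_{0}=\mathcal{R}_{0}=\mathcal{M}$ and $e=1$. Taking $\alpha=\varphi_{1}$, I would first check the normalization $\alpha\left(  e\right)  =\varphi_{1}\left(  1\right)  =\tau\left(  1\right)  =1$, which holds because $\tau$ is a state. It then remains only to verify the three hypotheses $\left(  1\right)  $--$\left(  3\right)  $ of Theorem \ref{35}, after which the conclusion is immediate.

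For hypothesis $\left(  1\right)  $, since $\mathcal{L}_{0}=\mathcal{R}_{0}=\mathcal{M}$ we have $\mathcal{L}_{0}\cap\mathcal{R}_{0}=\mathcal{M}=\mathcal{R}_{0}$, so $\left\{  R_{B}:B\in\mathcal{L}_{0}\cap\mathcal{R}_{0}\right\}  $ is literally all of $\mathcal{R}$ and is therefore trivially WOT-dense in $\mathcal{R}$. For hypothesis $\left(  3\right)  $, I would use $L_{A}L_{B}=L_{AB}$ together with the computation $\left(  1\otimes\alpha\right)  \left(  L_{A}L_{B}\right)  =\tau\left(  AB\right)  $; the tracial property $\tau\left(  AB\right)  =\tau\left(  BA\right)  $ then yields that $\mathcal{L}$ is $\left(  e\otimes\alpha\right)  $-tracial. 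Hypothesis $\left(  2\right)  $, the weak*-density $\left[  \mathcal{L}^{\#}\alpha\right]  ^{-\text{weak*}}=X^{\#}$, is the only substantive point; to establish it I would argue via pre-annihilators, reducing to the claim that if $A\in L^{\beta}\left(  \mathcal{M},\tau\right)  $ satisfies $\left(  L_{D}^{\#}\alpha\right)  \left(  A\right)  =\tau\left(  DA\right)  =0$ for every $D\in\mathcal{M}$, then $A=0$.

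The crux, and the one step where genuine operator-algebra structure enters, is this last density argument. Here I would pass to the polar decomposition $A=U\left\vert A\right\vert $ available in the measure-topology completion $\mathcal{\hat{M}}$, with $U$ a unitary in $\mathcal{M}$, and specialize to $D=U^{\ast}$ to obtain $0=\tau\left(  U^{\ast}A\right)  =\tau\left(  \left\vert A\right\vert \right)  $; faithfulness of $\tau$ then forces $\left\vert A\right\vert =0$ and hence $A=0$, so the pre-annihilator is $\left\{  0\right\}  $ and $\left(  2\right)  $ follows. With all three hypotheses verified, Theorem \ref{35} applies verbatim and gives that $\mathcal{M}=\left\{  L_{A}:A\in\mathcal{M}\right\}  $ is maximal $\left(  1\otimes\varphi_{1}\right)  $-tracial in $B\left(  L^{\beta}\left(  \mathcal{M},\tau\right)  \right)  $, which is the assertion of Theorem \ref{40}. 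I expect no serious obstacle beyond the care needed in the polar-decomposition step, since the remaining two conditions are essentially formal in this setting.
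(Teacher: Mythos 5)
Your proposal is correct and follows essentially the same route as the paper: the paper likewise deduces Theorem \ref{40} from Theorem \ref{35} applied to the multiplier pair $\left( L^{\beta}\left( \mathcal{M},\tau\right) ,\mathcal{\hat{M}}\right)$ with $\mathcal{L}_{0}=\mathcal{R}_{0}=\mathcal{M}$, $e=1$, $\alpha=\varphi_{1}$, verifying hypothesis (3) via $1\otimes\alpha=\tau$ and hypothesis (2) by the identical polar-decomposition argument ($A=U\left\vert A\right\vert$, take $D=U^{\ast}$, use faithfulness of $\tau$). Your explicit check of hypothesis (1) is left implicit in the paper but is, as you note, trivial since $\mathcal{L}_{0}\cap\mathcal{R}_{0}=\mathcal{M}$.
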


An example of a von Neumann algebra with a faithful normal tracial state is
$L^{\infty}\left(  \mu\right)  $ where $\left(  \Omega,\mu\right)  $ is a
probability space and $\tau:L^{\infty}\left(  \mu\right)  \rightarrow
\mathbb{C}$ is defined by%
\[
\tau\left(  f\right)  =\int_{\Omega}fd\mu\text{ .}%
\]
A normalized unitarily invariant norm $\beta$ on $L^{\infty}\left(
\mu\right)  $ is a \emph{normalized gauge norm}, i.e., a norm such that
$\beta\left(  1\right)  =1$ and $\beta\left(  f\right)  =\beta\left(
\left\vert f\right\vert \right)  $ for every $f\in L^{\infty}\left(
\mu\right)  $. If $\beta\geq\left\Vert .\right\Vert _{1}$, we have that
$L^{\beta}\left(  \mu\right)  $ is a multiplier pair and $\mathcal{L}%
_{0}=\mathcal{R}_{0}=L^{\infty}\left(  \mu\right)  $ is maximal abelian in
$B\left(  L^{\beta}\left(  \mu\right)  \right)  $. Thus Theorem \ref{10}
applies. A vector $f\in L^{\beta}\left(  \mu\right)  $ is cyclic for
$L^{\infty}\left(  \mu\right)  $ if and only if $f\neq0$ a.e. $\left(
\mu\right)  $.

\begin{corollary}
\label{45}Suppose $\left(  \Omega,\mu\right)  $ is a probability space,
$\beta\geq\left\Vert .\right\Vert _{1}$ is a normalized gauge norm on
$L^{\infty}\left(  \mu\right)  ,$ $f\in L^{\beta}\left(  \mu\right)  $ and
$\alpha\in L^{\beta}\left(  \mu\right)  $ satisfy
\[
f\neq0\text{ a.e. }\left(  \mu\right)  \text{ and }L^{\infty}\left(
\mu\right)  ^{\#}\alpha\text{ is weak* dense in }L^{\beta}\left(  \mu\right)
^{\#}.
\]
Then $L^{\infty}\left(  \mu\right)  $ is maximal $\left(  f\otimes
\alpha\right)  $-tracial.
\end{corollary}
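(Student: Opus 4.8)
The plan is to recognize this statement as a direct application of Corollary \ref{25} (the Banach-space incarnation of Theorem \ref{10}) to the multiplier pair $\left( L^{\beta}\left( \mu\right) ,\mathcal{\hat{M}}\right) $ discussed just above. I would set $X=L^{\beta}\left( \mu\right) $, take $Y=X^{\#}$ to be its normed dual, and let $\mathcal{A}=L^{\infty}\left( \mu\right) $ act on $X$ by multiplication, so that $\mathcal{A}\subset B\left( L^{\beta}\left( \mu\right) \right) =L_{\sigma\left( X,Y\right) }\left( X\right) $. Because $\beta\geq\left\Vert .\right\Vert _{1}$ is a normalized gauge norm, the preceding paragraph already records that $L^{\infty}\left( \mu\right) $ is a \emph{maximal abelian} subalgebra of $B\left( L^{\beta}\left( \mu\right) \right) $, so the standing maximal-abelianness hypothesis of Corollary \ref{25} is in hand for free, and only the two density conditions remain to be matched against the two stated hypotheses.

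For the condition $\left[ \mathcal{A}x\right] ^{-\left\Vert .\right\Vert }=X$ with $x=f$, I would quote the cyclicity criterion stated immediately before the corollary: a vector $f\in L^{\beta}\left( \mu\right) $ is cyclic for $L^{\infty}\left( \mu\right) $ precisely when $f\neq0$ a.e. $\left( \mu\right) $. Thus the first hypothesis is exactly $\left[ L^{\infty}\left( \mu\right) f\right] ^{-\left\Vert .\right\Vert }=L^{\beta}\left( \mu\right) $. For the condition $\left[ \mathcal{A}^{\#}\alpha\right] ^{-\text{weak*}}=Y$, I note that $\mathcal{A}^{\#}=L^{\infty}\left( \mu\right) ^{\#}$ is the set of Banach-space adjoints of the multiplication operators, so that $\left[ \mathcal{A}^{\#}\alpha\right] ^{-\text{weak*}}=\left[ L^{\infty}\left( \mu\right) ^{\#}\alpha\right] ^{-\text{weak*}}$, which equals $L^{\beta}\left( \mu\right) ^{\#}$ by the second hypothesis verbatim. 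With both density conditions verified, Corollary \ref{25} delivers that $L^{\infty}\left( \mu\right) $ is maximal $\left( f\otimes\alpha\right) $-tracial.

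The one point demanding care — and the only genuine obstacle — is the normalization $\alpha\left( f\right) =\left( f\otimes\alpha\right) \left( 1\right) =1$ that is required both by Corollary \ref{25} and by the definition of a tracial functional, but which is not spelled out among the hypotheses. I expect to handle it by observing that $\left( f\otimes\alpha\right) \left( 1\right) =\alpha\left( f\right) $ must be nonzero for $f\otimes\alpha$ to be a (scalar multiple of a) unital functional, and that being maximal $\varphi$-tracial is unaffected when $\varphi$ is replaced by a nonzero scalar multiple, since $\varphi\left( xy\right) =\varphi\left( yx\right) $ if and only if $c\varphi\left( xy\right) =c\varphi\left( yx\right) $. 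Hence, after rescaling $\alpha$ by $1/\alpha\left( f\right) $ — which disturbs neither the norm-density of $L^{\infty}\left( \mu\right) f$ nor the weak*-density of $L^{\infty}\left( \mu\right) ^{\#}\alpha$ — one may assume $\alpha\left( f\right) =1$ and invoke Corollary \ref{25}. I anticipate this normalization bookkeeping, rather than any substantive analysis, to be the subtle step, since the real analytic content, namely the maximal abelianness of $L^{\infty}\left( \mu\right) $ and the cyclic-vector characterization, has already been supplied in the surrounding text.
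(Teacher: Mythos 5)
Your proposal matches the paper's own (implicit) proof: the paper obtains Corollary \ref{45} precisely by invoking Theorem \ref{10} (in its Banach-space form, Corollary \ref{25}) together with the two facts recorded in the paragraph immediately preceding it, namely that $L^{\infty}\left(\mu\right)$ is maximal abelian in $B\left(L^{\beta}\left(\mu\right)\right)$ by the multiplier-pair results and that $f$ is cyclic for $L^{\infty}\left(\mu\right)$ if and only if $f\neq0$ a.e.\ $\left(\mu\right)$. Your additional rescaling argument to secure the normalization $\alpha\left(f\right)=1$ is sound bookkeeping for a hypothesis the paper's statement leaves implicit, and does not change the substance of the argument.
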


We now consider an important problem. Suppose $\mathcal{M}$ is a von Neumann
algebra on a Hilbert space $\mathcal{H}$ and $e,f\in\mathcal{H}$ and
$\left\langle e,f\right\rangle =1.$ If $e\otimes f$ is tracial on
$\mathcal{M}$, when is $\mathcal{M}$ maximal $\left(  e\otimes f\right)  $-tracial?

We first note that if $E$ is an invariant subspace for $\mathcal{M}$, then
$E^{\perp}$ is also invariant, so Theorem \ref{30} implies that a necessary
condition for $\mathcal{M}$ to be maximal $\left(  e\otimes f\right)
$-tracial is that $e$ and $f$ be cyclic vectors for $\mathcal{M}$. The real
question is whether the converse is true.

\begin{problem}
\label{50}Suppose that $\mathcal{M}$ is a von Neumann algebra on a Hilbert
space $\mathcal{H}$ and $e,f$ are cyclic vectors for $\mathcal{M}$ with
$\left\langle e,f\right\rangle =1.$ If $\mathcal{M}$ is $\left(  e\otimes
f\right)  $-tracial, then is $\mathcal{M}$ maximal $\left(  e\otimes f\right)
$-tracial?
\end{problem}

In the case where $\mathcal{H}$ is separable, we will reduce the problem to
the case where $\mathcal{M}$ is a finite factor von Neumann algebra, and such
a factor has a unique norm continuous unital tracial functional.

Suppose $\mathcal{M}$ is a von Neumann algebra on a separable Hilbert space
$\mathcal{H}$ and $e,f$ are cyclic vectors for $\mathcal{M}$ with
$\left\langle e,f\right\rangle =1.$ Let
\begin{align*}
\mathcal{J}  &  =\left\{  A\in\mathcal{M}:\left(  e\otimes f\right)  \left(
AB\right)  =0\text{ for all }B\in\mathcal{M}\right\} \\
&  =\left\{  A\in\mathcal{M}:Ae\perp\mathcal{M}f\right\} \\
&  =\left\{  A\in\mathcal{M}:Ae=0\right\}  .
\end{align*}
Clearly, $\mathcal{J}$ is a WOT-closed linear space. Also, if $e\otimes f$ is
tracial on $\mathcal{M}$, then $\mathcal{J}$ is a two-sided ideal in
$\mathcal{M}$. Hence there is a projection $P$ in the center $\mathcal{Z}%
\left(  \mathcal{M}\right)  $ such that $\mathcal{J=}P\mathcal{M}$. Thus
$P\mathcal{M}e=\mathcal{M}Pe=$ $0$. Since $\mathcal{M}e$ is dense in
$\mathcal{H}$, it follows that $P=0$ and $\mathcal{J}=0.$ Similarly, if
$A\in\mathcal{M}$ and $Af=0$, then $A=0.$ Thus $e$ and $f$ are cyclic
separating vectors for $\mathcal{M}$.

If we write the central decomposition for $\mathcal{M}$, we get
\[
\mathcal{H}=\int_{\Omega}^{\oplus}\mathcal{H}_{\omega}d\mu\left(
\omega\right)  ,\text{ }\mathcal{M}=\int_{\Omega}^{\oplus}\mathcal{M}_{\omega
}d\mu\left(  \omega\right)  ,\text{ }%
\]%
\[
e=\int_{\Omega}^{\oplus}e_{\omega}d\mu\left(  \omega\right)  ,\text{ }%
f=\int_{\Omega}^{\oplus}f_{\omega}d\mu\left(  \omega\right)  \text{, and}%
\]
and
\[
\mathcal{Z}\left(  \mathcal{M}\right)  =\int_{\Omega}^{\oplus}%
\mathcal{\mathbb{C}}I_{\omega}d\mu\left(  \omega\right)  \text{ ,}%
\]
where each $\mathcal{M}_{\omega}$ is a factor von Neumann algebra. Since $e$
and $f$ are separating vectors for $\mathcal{M}$, we have $e_{\omega}\neq0$
and $f_{\omega}\neq0$ almost everywhere. We also get $e_{\omega}$ and
$f_{\omega}$ are cyclic and separating for $\mathcal{M}_{\omega}$ a.e.
$\left(  \mu\right)  $. We also have $e_{\omega}\otimes f_{\omega}$ is tracial
for almost every $\omega$. Since a factor von Neumann algebra $\mathcal{M}%
_{\omega}$ can have at most one nonzero continuous unital tracial state
$\tau_{\omega}$, we know that almost every $\mathcal{M}_{\omega}$ is a finite
factor and has a unique tracial functional $\tau_{\omega}$ of norm $1.$ We see
that $\left\langle e_{\omega},f_{\omega}\right\rangle \neq0$ a.e. $\left(
\mu\right)  $. Thus $\tau_{\omega}=\frac{1}{\left\langle e_{\omega},f_{\omega
}\right\rangle }e_{\omega}\otimes f_{\omega}$.

Suppose now that $\mathcal{M}\subset\mathcal{D}$ and $\mathcal{D}$ is $\left(
e\otimes f\right)  $-tracial. Suppose $P$ is a central projection in
$\mathcal{M}$. Then, for all $A,B\in\mathcal{M}$, we have%
\begin{align*}
\left(  e\otimes f\right)  \left(  A\left(  PT-TP\right)  B\right)   &
=\left(  e\otimes f\right)  \left(  APTB\right)  -\left(  e\otimes f\right)
\left(  ATPB\right) \\
&  =\left(  e\otimes f\right)  \left(  \left(  BAPT\right)  \right)  -\left(
e\otimes f\right)  \left(  \left(  BPAT\right)  \right) \\
&  =\left(  e\otimes f\right)  \left(  \left(  PBAT\right)  \right)  -\left(
e\otimes f\right)  \left(  \left(  PBAT\right)  \right) \\
&  =0.
\end{align*}
Thus
\[
\left\langle \left(  PT-TP\right)  \left(  Be\right)  ,\left(  A^{\ast
}f\right)  \right\rangle =0
\]
for every $A,B\in\mathcal{M}$. Since $e$ and $f$ are cyclic, we see that
$PT-TP=0$ for every projection $P\in\mathcal{Z}\left(  \mathcal{M}\right)  $.
Thus $T\in\mathcal{Z}\left(  \mathcal{M}\right)  ^{\prime}$. Hence we can
write $T=\int_{\Omega}^{\oplus}T_{\omega}d\mu\left(  \omega\right)  $. Since
the algebra generated by $T$ and $\mathcal{M}$ is $\left(  e\otimes f\right)
$-tracial, it follows that, for almost all $\omega\in\Omega$, the algebra
generated by $T_{\omega}$ and $\mathcal{M}_{\omega}$ is $\left(  e_{\omega
}\otimes f_{\omega}\right)  $-tracial. It now easily follows that
$\mathcal{M}$ is maximal $\left(  e\otimes f\right)  $-tracial if and only if
$\mathcal{M}_{\omega}$ is maximal $\left(  \frac{1}{\left\langle e_{\omega
},f_{\omega}\right\rangle }e_{\omega}\otimes f_{\omega}\right)  $-tracial a.e.
$\left(  \mu\right)  $. Hence when $\mathcal{H}$ is separable, this reduces
the problem to the case where $\mathcal{M}$ is a finite factor von Neumann algebra.

Suppose now that $\mathcal{M}$ is a finite factor and $\tau$ is the unique
continuous unital tracial functional on $\mathcal{M}$. Since $e$ and $f$ are
separating cyclic vectors, we can assume, via unitary equivalence \cite{Kad0},
that $\mathcal{H}=L^{2}\left(  \mathcal{M},\tau\right)  $ and $e,f\in
L^{2}\left(  \mathcal{M},\tau\right)  \subset L^{1}\left(  \mathcal{M}%
,\tau\right)  \subset\mathcal{\hat{M}}$ as above. We now have $e\otimes
f=\tau$ on $\mathcal{M}$. This means that, for every $A\in\mathcal{M}$,%
\[
\tau\left(  f^{\ast}Ae\right)  =\tau\left(  Aef^{\ast}\right)  =\tau\left(
A\right)  \text{.}%
\]
Thus
\[
\tau\left(  A\left(  1-ef^{\ast}\right)  \right)  =0.
\]
It follows $ef^{\ast}=1$ (where the multiplication is in $\mathcal{\hat{M}}$).
Since $\mathcal{M}$ is finite, $f^{\ast}e=1$. Thus $e,e^{-1}\in L^{2}\left(
\mathcal{M},\tau\right)  $ and $f^{\ast}=e^{-1},$ so $f=\left(  e^{-1}\right)
^{\ast}$. Thus $e\otimes f=e\otimes\left(  e^{-1}\right)  ^{\ast}$. Now the
question asks whether $\mathcal{M}$ is maximal $\left(  e\otimes\left(
e^{-1}\right)  ^{\ast}\right)  $-tracial.

If our question has an affirmative answer, it would imply that if
$\mathcal{M}\subset\mathcal{B}\left(  \mathcal{H}\right)  $ is a von Neumann
algebra and $S\in\mathcal{B}\left(  \mathcal{H}\right)  $ is invertible and
$\mathcal{A}=S^{-1}\mathcal{M}S$ and $\left(  e\otimes f\right)  $ is a unital
tracial functional on $\mathcal{A}$, then $\mathcal{A}$ is maximal $\left(
e\otimes f\right)  $-tracial.

There is a slight relationship between these ideas and a famous problem in
C*-algebra theory called the \emph{Kadison Similarity Problem} \cite{Kad},
which asks if every bounded unital homomorphism $\rho:\mathcal{A}\rightarrow
B\left(  \mathcal{H}\right)  $ from a C*-algebra $\mathcal{M}$ is similar to a
$\ast$-homomorphism. It is enough to prove the case when $\mathcal{A}$ is
separable. This is equivalent to the question when $\mathcal{A}$ is a von
Neumann algebra acting on a separable Hilbert space, and $\rho$ is weak*-weak*
continuous \cite{Kad}. It follows from \cite{Kad} that the problem reduces to
the case when $\mathcal{A}$ is a $II_{1}$ factor von Neumann algebra on a
separable Hilbert space. It follows from a theorem of R. Kadison, that when
$\mathcal{A}$ is a $II_{1}$ factor and $\rho:\mathcal{A}\rightarrow B\left(
\mathcal{H}\right)  $ is weak*-weak* continuous, that $\rho\left(
\mathcal{A}\right)  $ is similar to a von Neumann algebra.

\textbf{Question:} Suppose $\mathcal{A}$ is a $II_{1}$ factor von Neumann
algebra, $\rho:\mathcal{A}\rightarrow B\left(  \mathcal{H}\right)  $ is a
weak*-weak* continuous bounded $\ast$-homomorphism, and $e,f\in\mathcal{H}$
satisfy $\left\langle e,f\right\rangle =1,$ $\rho\left(  \mathcal{A}\right)  $
is $\left(  e\otimes f\right)  $-tracial, and $e$ is cyclic for $\rho\left(
\mathcal{A}\right)  $ and $f$ is cyclic for $\rho\left(  \mathcal{A}\right)
^{\ast}$, must $\mathcal{\rho}\left(  \mathcal{A}\right)  $ be maximal
$\left(  e\otimes f\right)  $-tracial? Is the lattice of invariant subspaces
of $\rho\left(  \mathcal{A}\right)  $ complemented?

\section{Maximal Tracial Ultraproducts}

The theory of ultraproducts has played a fundamental role in many areas of
mathematics, logic, Banach spaces, von Neumann algebras and C*-algebras. For
von Neumann algebras, the tracial ultraproducts have been extremely important.
We define a new type of ultraproduct. Suppose $\left\{  \mathcal{A}_{i}:i\in
I\right\}  $ is an infinite collection of unital C*-algebras. Also suppose
$\alpha$ is a nontrivial ultrafilter on $I$. We let $\mathcal{A}=%
%TCIMACRO{\dprod _{i\in I}}%
%BeginExpansion
{\displaystyle\prod_{i\in I}}
%EndExpansion
\mathcal{A}_{i}$ be the C*-direct product, and we define
\[
\mathcal{J}=\left\{  \left\{  a_{i}\right\}  \in\mathcal{A}:\lim
_{i\rightarrow\alpha}\left\Vert a_{i}\right\Vert =0\right\}  \text{.}%
\]
Then $\mathcal{J}$ is a closed two-sided ideal in $\mathcal{A}$, and
$\mathcal{A}/\mathcal{J}$ is called the\emph{ ultraproduct }of the C*-algebras
$\mathcal{A}_{i}$, and is denoted by%
\[%
%TCIMACRO{\dprod _{i\in I}^{\alpha}}%
%BeginExpansion
{\displaystyle\prod_{i\in I}^{\alpha}}
%EndExpansion
\mathcal{A}_{i}\text{ .}%
\]
Next suppose for each $i\in I$ we have $\tau_{i}$ is a tracial state on
$\mathcal{A}_{i}$. If we define
\[
\mathcal{J}_{\alpha}=\left\{  \left\{  a_{i}\right\}  \in\mathcal{A}%
:\lim_{i\rightarrow\alpha}\tau_{i}\left(  a_{i}^{\ast}a_{i}\right)
=0\right\}  \text{,}%
\]
then $\mathcal{J}_{\alpha}$ is a closed two-sided ideal in $\mathcal{A}$, and
$\mathcal{A}/\mathcal{J}_{\alpha}$ is called the\emph{ tracial ultraproduct
}of the C*-algebras $\mathcal{A}_{i}$, and is denoted by%
\[%
%TCIMACRO{\dprod _{i\in I}^{\alpha}}%
%BeginExpansion
{\displaystyle\prod_{i\in I}^{\alpha}}
%EndExpansion
\left(  \mathcal{A}_{i},\tau_{i}\right)  \text{.}%
\]
S. Sakai \cite{Sakai} proved that if each $\mathcal{A}_{i}$ is a factor von
Neumann algebra, then $%
%TCIMACRO{\dprod _{i\in I}^{\alpha}}%
%BeginExpansion
{\displaystyle\prod_{i\in I}^{\alpha}}
%EndExpansion
\left(  \mathcal{A}_{i},\tau_{i}\right)  $ is a factor von Neumann algebra.
More recently, the first author and W. H. Li \cite{HadLi} proved that every
tracial ultraproduct of C*-algebras is a von Neumann algebra.

Another way to view a tracial ultraproduct, is by defining a tracial state
$\tau_{\alpha}$ on $%
%TCIMACRO{\dprod _{i\in I}}%
%BeginExpansion
{\displaystyle\prod_{i\in I}}
%EndExpansion
\mathcal{A}_{i}$ by%
\[
\tau_{\alpha}\left(  \left\{  a_{i}\right\}  \right)  =\lim_{i\rightarrow
\alpha}\tau_{i}\left(  a_{i}\right)  .
\]
Then $\mathcal{J}_{\alpha}=\left\{  a\in%
%TCIMACRO{\dprod _{i\in I}}%
%BeginExpansion
{\displaystyle\prod_{i\in I}}
%EndExpansion
\mathcal{A}_{i}:\tau_{\alpha}\left(  a^{\ast}a\right)  =0\right\}  $, which is
the first part of the GNS construction for $\tau_{\alpha}$.

We now consider the case where, for each $i\in I$, $\varphi_{i}$ is a state on
$\mathcal{A}_{i}$. We define $\varphi_{\alpha}$ on $%
%TCIMACRO{\dprod _{i\in I}}%
%BeginExpansion
{\displaystyle\prod_{i\in I}}
%EndExpansion
\mathcal{A}_{i}$ by%
\[
\varphi_{\alpha}\left(  \left\{  a_{i}\right\}  \right)  =\lim_{i\rightarrow
\alpha}\varphi_{i}\left(  a_{i}\right)  .
\]
Let $\mathcal{M}\subset\mathcal{A}$ be a maximal tracial C*-algebra with
respect to $\varphi$. We define the maximal tracial ultraproduct with respect
to $\mathcal{M}$, denoted by%
\[%
%TCIMACRO{\dprod _{\mathcal{M}}^{\alpha}}%
%BeginExpansion
{\displaystyle\prod_{\mathcal{M}}^{\alpha}}
%EndExpansion
\left(  \mathcal{A}_{i},\varphi_{i}\right)  ,
\]
to be $\mathcal{M}/\mathcal{J}_{\alpha}\left(  \mathcal{M}\right)  $, where
$\mathcal{J}\left(  \alpha,\mathcal{M}\right)  =\left\{  a\in\mathcal{M}%
:\varphi_{\alpha}\left(  a^{\ast}a\right)  =0\right\}  $. Since $\varphi
_{\alpha}|_{\mathcal{M}}$ is tracial, $\varphi_{\alpha}$ induces a tracial
state $\hat{\varphi}_{\alpha}.$

If each $\varphi_{i}$ is tracial, then $\mathcal{A}$ is $\varphi$-tracial and
$%
%TCIMACRO{\dprod _{\mathcal{A}}^{\alpha}}%
%BeginExpansion
{\displaystyle\prod_{\mathcal{A}}^{\alpha}}
%EndExpansion
\left(  \mathcal{A}_{i},\varphi_{i}\right)  $ is the usual tracial
ultraproduct. If $\varphi$ is tracial on $\mathcal{A}$, then $%
%TCIMACRO{\dprod _{\mathcal{A}}^{\alpha}}%
%BeginExpansion
{\displaystyle\prod_{\mathcal{A}}^{\alpha}}
%EndExpansion
\left(  \mathcal{A}_{i},\varphi_{i}\right)  $ is the tracial ultraproduct
defined by H. Ando and E. Kirchberg \cite{AndoKir}.

One of the biggest open problems in the theory of von Neumann algebras is
\emph{Connes' Embedding Problem} \cite{Connes}, which asks if every von
Neumann algebra $\mathcal{R}$ acting on a separable Hilbert space with a
faithful normal tracial state $\tau$ can be tracially embedded into a tracial
ultraproduct $%
%TCIMACRO{\dprod _{i\in I}^{\alpha}}%
%BeginExpansion
{\displaystyle\prod_{i\in I}^{\alpha}}
%EndExpansion
\left(  \mathcal{A}_{i},\tau_{i}\right)  $ with each $\mathcal{A}_{i}$
finite-demensional. This means there is a unital $\ast$-homomorphism
$\pi:\mathcal{R}\rightarrow%
%TCIMACRO{\dprod _{i\in I}^{\alpha}}%
%BeginExpansion
{\displaystyle\prod_{i\in I}^{\alpha}}
%EndExpansion
\left(  \mathcal{A}_{i},\tau_{i}\right)  $ such that $\tau=\hat{\tau}_{\alpha
}\circ\pi$.

The analogue of Connes' embedding problem has an affirmative answer in the
setting of maximal tracial ultraproducts.

\begin{theorem}
\label{60}Every von Neumann algebra $\mathcal{R}$ acting on a separable
Hilbert space with a faithful normal tracial state $\tau$ can be tracially
embedded into a tracial ultraproduct of finite-dimensional algebras.
\end{theorem}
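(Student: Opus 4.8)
The plan is to construct the embedding explicitly by compressing $\mathcal{R}$ to finite-dimensional corners and then using the \emph{maximal} tracial ultraproduct to repair the failure of multiplicativity; I read the target in the statement as the maximal tracial ultraproduct $\prod_{\mathcal M}^{\alpha}(\mathcal A_i,\varphi_i)$ of Section~5, which is the ``analogue'' referred to just above. First I would pass to the GNS picture and assume $\mathcal R$ acts on $\mathcal H=L^{2}(\mathcal R,\tau)$ with a cyclic, separating unit vector $\xi$ satisfying $\tau(A)=\langle A\xi,\xi\rangle$. Since $\mathcal H$ is separable I choose an orthonormal basis $\{e_n\}$ with $e_1=\xi$ and let $P_n$ be the projection onto $\mathrm{span}\{e_1,\dots,e_n\}$, so that $P_n\xi=\xi$ for every $n$ and $P_n\to 1$ strongly. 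Taking $\mathcal A_n=P_nB(\mathcal H)P_n\cong M_n(\mathbb C)$ with the vector state $\varphi_n(T)=\langle T\xi,\xi\rangle$, I define the compression $\pi_0:\mathcal R\to\prod_n\mathcal A_n$ by $\pi_0(A)=\{P_nAP_n\}_n$. This map is unital, linear and $\ast$-preserving but \emph{not} multiplicative, and its defects $P_nA(1-P_n)BP_n$ do not vanish in operator norm; this is exactly why the operator-norm ultraproduct fails and the maximal tracial construction is forced upon us.

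The crux is an $L^{2}$-asymptotic multiplicativity estimate. Because $P_n\xi=\xi$ one has $\varphi_n(\pi_0(A))=\langle A\xi,\xi\rangle=\tau(A)$ exactly, so $\varphi_\alpha(\pi_0(A))=\tau(A)$. More generally I would prove, for all $A_1,\dots,A_k\in\mathcal R$,
\[
\lim_{n\to\infty}\varphi_n\big((P_nA_1P_n)\cdots(P_nA_kP_n)\big)=\tau(A_1\cdots A_k),
\]
by a telescoping argument: replacing the internal projections by $1$ one at a time yields error terms of the form $(\text{bounded})\cdot(1-P_n)A_{j+1}\cdots A_k\xi$, each tending to $0$ in norm since $A_{j+1}\cdots A_k\xi$ is a fixed vector and $P_n\to 1$ strongly. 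Hence $\varphi_\alpha(\pi_0(A_1)\cdots\pi_0(A_k))=\tau(A_1\cdots A_k)$. Two consequences follow. The defect $D_{A,B}=\pi_0(A)\pi_0(B)-\pi_0(AB)$ satisfies $\varphi_\alpha(D_{A,B}^{\ast}D_{A,B})=\lim_n\|P_nA(1-P_n)B\xi\|^{2}=0$, so it lies in the GNS kernel of $\varphi_\alpha$; and, by cyclicity of $\tau$, $\varphi_\alpha(\pi_0(A_1)\cdots\pi_0(A_k))=\varphi_\alpha(\pi_0(A_2)\cdots\pi_0(A_k)\pi_0(A_1))$, so extending bilinearly the $\ast$-algebra generated by $\pi_0(\mathcal R)$ is $\varphi_\alpha$-tracial.

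To assemble the embedding I would invoke parts (5) and (3) of the Proposition in Section~1: the norm closure of this algebra is again $\varphi_\alpha$-tracial (as $\varphi_\alpha$ is norm continuous) and is therefore contained in a maximal $\varphi_\alpha$-tracial C*-algebra $\mathcal M\supset\pi_0(\mathcal R)$. Since $\varphi_\alpha|_{\mathcal M}$ is tracial, $\mathcal J(\alpha,\mathcal M)=\{a\in\mathcal M:\varphi_\alpha(a^{\ast}a)=0\}$ is a genuine two-sided ideal, so $\mathcal M/\mathcal J(\alpha,\mathcal M)$ is the maximal tracial ultraproduct $\prod_{\mathcal M}^{\alpha}(\mathcal A_n,\varphi_n)$ with induced tracial state $\hat\varphi_\alpha$. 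I then set $\pi(A)=\pi_0(A)+\mathcal J(\alpha,\mathcal M)$. It is unital and $\ast$-preserving, and it is multiplicative because $D_{A,B}\in\mathcal M$ has zero GNS seminorm, whence $D_{A,B}\in\mathcal J(\alpha,\mathcal M)$; moreover $\hat\varphi_\alpha\circ\pi=\tau$. Finally $\pi$ is injective: if $\pi(A)=0$ then $\tau(A^{\ast}A)=\hat\varphi_\alpha(\pi(A)^{\ast}\pi(A))=0$, and faithfulness of $\tau$ gives $A=0$. This produces the desired trace-preserving embedding of $\mathcal R$ into a maximal tracial ultraproduct of the finite-dimensional algebras $M_n(\mathbb C)$.

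The main obstacle I anticipate is the second paragraph: proving $L^{2}$-asymptotic multiplicativity and controlling the defect purely through the strong convergence $P_n\to 1$, since the corresponding operator-norm statement is false. The decisive point is that the ordinary operator-norm ultraproduct, and even the ordinary tracial ultraproduct, are too rigid here because the vector states $\varphi_n$ are not tracial; passing to a \emph{maximal} $\varphi_\alpha$-tracial subalgebra $\mathcal M$ simultaneously makes $\mathcal J(\alpha,\mathcal M)$ two-sided and lets it absorb every multiplicativity error, converting the asymptotic homomorphism $\pi_0$ into an honest injective, trace-preserving $\ast$-homomorphism.
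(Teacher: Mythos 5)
Your proof is correct, and its overall architecture is the same as the paper's: compress $\mathcal{R}$ to the corners $\mathcal{A}_n=P_nB(\mathcal{H})P_n$ with the vector states $\varphi_n$ (exact on compressions because $P_n$ fixes the trace vector), locate a $\varphi_\alpha$-tracial C*-algebra containing the compressions, enlarge it to a maximal $\varphi_\alpha$-tracial C*-algebra $\mathcal{M}$ by Zorn's lemma, and quotient by $\mathcal{J}(\alpha,\mathcal{M})$; your reading of the target as the maximal tracial ultraproduct is also the intended one. Where you genuinely differ is in which tracial algebra you exhibit and how you prove it is tracial. The paper takes $\mathcal{M}_0$ to be all bounded sequences $\{A_n\}$ that are *SOT convergent with limit in $\mathcal{R}$; traciality of $\varphi_\alpha$ on $\mathcal{M}_0$ is then immediate, since products of bounded *SOT-convergent sequences converge *SOT to the product of the limits, giving $\varphi_\alpha(ST)=\tau(AB)=\tau(BA)=\varphi_\alpha(TS)$. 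You instead take the $\ast$-algebra generated by $\pi_0(\mathcal{R})$ (a subalgebra of the paper's $\mathcal{M}_0$) and prove traciality through the telescoping $L^2$-asymptotic multiplicativity estimate; this is more computational but self-contained, and it spares you the claim, asserted without proof in the paper, that $\mathcal{M}_0$ is norm closed and hence a C*-algebra. Your route also pays off at the end: you explicitly check that $\pi$ is multiplicative by showing each defect $D_{A,B}$ lies in $\mathcal{M}$ and satisfies $\varphi_\alpha(D_{A,B}^{\ast}D_{A,B})=0$, hence lies in the ideal $\mathcal{J}(\alpha,\mathcal{M})$. The paper's proof omits this verification entirely, checking only $\hat\varphi_\alpha\circ\pi=\tau$ before concluding that $\pi$ is an embedding, although in its framework the missing step is a one-liner: the defect sequence tends to $0$ in *SOT, so $\varphi_\alpha(D^{\ast}D)=\lim_{n\rightarrow\alpha}\left\Vert D_nf\right\Vert^2=0$. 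In short: same strategy, a different key lemma for traciality, and a more complete treatment of the homomorphism and injectivity properties on your side.
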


\begin{proof}
We can assume that $\mathcal{R}\subset B\left(  \mathcal{H}\right)  $ for some
separable infinite-dimensional Hilbert space $\mathcal{H}$ and that there is a
unit vector $f\in\mathcal{H}$ such that, for every $T\in\mathcal{R}$,
\[
\tau\left(  T\right)  =\left\langle Tf,f\right\rangle .
\]
We can also assume that $f$ is a cyclic vector for $\mathcal{R}$. Let
$E=\left\{  e_{1},e_{2},\ldots\right\}  $ be an orthonormal basis for
$\mathcal{H}$ such that $e_{1}=f$. For each positive integer $n$, let
$P_{n}\ $be the orthogonal projection onto the linear span of $\left\{
e_{1},\ldots,e_{n}\right\}  $. Let $\mathcal{A}_{n}=P_{n}B\left(  H\right)
P_{n}$ and define a state $\varphi_{n}:\mathcal{A}_{n}\rightarrow\mathbb{C}$
by
\[
\varphi_{n}\left(  A\right)  =\left\langle Af,f\right\rangle =\left\langle
Ae_{1},e_{1}\right\rangle .
\]
Let $\alpha$ be any free ultrafilter on $\mathbb{N}$. Let
\[
\mathcal{M}_{0}=\left\{  \left\{  A_{n}\right\}  \in%
%TCIMACRO{\dprod _{n\in\mathbb{N}}}%
%BeginExpansion
{\displaystyle\prod_{n\in\mathbb{N}}}
%EndExpansion
\mathcal{A}_{n}:\left\{  A_{n}\right\}  \text{ is *SOT convergent, }%
\lim_{n\rightarrow\infty}A_{n}\in\mathcal{R}\right\}  \text{.}%
\]
Since $\tau$ is a trace on $\mathcal{R}$, for $S=\left\{  A_{n}\right\}
,T=\left\{  B_{n}\right\}  \in\mathcal{M}_{0}$ with $A_{n}\rightarrow
A\in\mathcal{R}$ and $B_{n}\rightarrow B\in\mathcal{R}$ $\left(
\text{*SOT}\right)  $, we have%
\[
A_{n}B_{n}\rightarrow AB\text{ and }B_{n}A_{n}\rightarrow BA\text{ }\left(
\text{*SOT}\right)  .
\]
Thus
\[
\varphi_{\alpha}\left(  ST\right)  =\tau\left(  AB\right)  =\tau\left(
BA\right)  =\varphi_{\alpha}\left(  TS\right)  .
\]
Hence $\varphi_{\alpha}$ is tracial for the C*-algebra $\mathcal{M}_{0}$. Then
there is a maximal $\varphi_{\alpha}$-tracial C*-algebra $\mathcal{M}\subset%
%TCIMACRO{\dprod _{n\in\mathbb{N}}}%
%BeginExpansion
{\displaystyle\prod_{n\in\mathbb{N}}}
%EndExpansion
\mathcal{A}_{n}.$

Let $\eta:\mathcal{M\rightarrow M}/\left(  \mathcal{M}\cap\mathcal{J}_{\alpha
}\right)  =%
%TCIMACRO{\dprod _{\mathcal{M}}^{\alpha}}%
%BeginExpansion
{\displaystyle\prod_{\mathcal{M}}^{\alpha}}
%EndExpansion
\left(  \mathcal{A}_{n},\varphi_{n}\right)  $ be the quotient map and define
$\pi:\mathcal{R}\rightarrow%
%TCIMACRO{\dprod _{\mathcal{M}}^{\alpha}}%
%BeginExpansion
{\displaystyle\prod_{\mathcal{M}}^{\alpha}}
%EndExpansion
\left(  \mathcal{A}_{n},\varphi_{n}\right)  $ by%
\[
\gamma\left(  A\right)  =\eta\left(  \left\{  P_{n}AP_{n}\right\}  \right)  .
\]
We easily see that, for every $A\in\mathcal{R}$,%
\[
\hat{\varphi}_{\alpha}\left(  \pi\left(  A\right)  \right)  =\lim_{\alpha
}\left\langle P_{n}AP_{n}f,f\right\rangle =\left\langle Af,f\right\rangle
=\tau\left(  A\right)  .
\]
Thus $\hat{\varphi}_{\alpha}\circ\pi=\tau$. Since $\tau$ is faithful on
$\mathcal{R}$, $\pi$ is an embedding.
\end{proof}

\end{document}